\def\VR{\kern-\arraycolsep\strut\vrule &\kern-\arraycolsep}
\def\vr{\kern-\arraycolsep & \kern-\arraycolsep}
\newtheorem{theorem}{Theorem}
\newtheorem{lemma}[theorem]{Lemma}
\newtheorem{prop}[theorem]{Proposition}
\newtheorem{corollary}[theorem]{Corollary}
\theoremstyle{definition}
\newtheorem{definition}[theorem]{Definition}
\newtheorem*{prob}{Problem}
\newtheorem{rmk}[theorem]{Remark}
\newenvironment{remark}[1][]{\begin{rmk}[#1]\pushQED{\qed}}{\popQED \end{rmk}}
\newtheorem{qu}[theorem]{Question}
\newtheorem*{rmknonum}{Remark}
\newtheorem{obs}[theorem]{Observation}
\newtheorem{ex}[theorem]{Example}
\newenvironment{example}[1][]{\begin{ex}[#1]\pushQED{\qed}}{\popQED \end{ex}}
\newcommand{\pctext}[2]{\text{\parbox{#1}{\centering #2}}}
\newcommand{\Hom}{\operatorname{Hom}}
\newcommand{\End}{\operatorname{End}}
\newcommand{\Ext}{\operatorname{Ext}}
\newcommand{\Ann}{\operatorname{Ann}}
\newcommand{\ext}{\operatorname{ext}}
\newcommand{\rep}{\operatorname{rep}}
\newcommand{\SI}{\operatorname{SI}}
\newcommand{\GL}{\operatorname{GL}}
\newcommand{\ZZ}{\mathbb Z}
\newcommand{\CC}{\mathbb C}
\newcommand{\RR}{\mathbb R}
\newcommand{\NN}{\mathbb N}
\newcommand{\I}{\mathcal I}
\newcommand{\A}{\mathcal A}
\newcommand{\LL}{\mathcal L}
\newcommand{\V}{W}
\newcommand{\Ima}{\operatorname{Im}}
\newcommand{\supp}{\operatorname{supp}}
\newcommand{\ddim}{\operatorname{\mathbf{dim}}}
\newcommand{\dd}{\operatorname{\mathbf{d}}}
\newcommand{\pdim}{\mathsf{pdim}}
\newcommand{\C}{\mathcal{C}}
\newcommand{\p}{\mathcal{P}}
\newcommand{\Ar}{\mathcal{P}}
\newcommand{\s}{\mathcal{S}}
\newcommand{\K}{\mathcal{K}}
\newcommand{\jj}{\mathcal{I}^{-}_j}
\newcommand{\ii}{\mathcal{I}^{+}_i}
\newcommand{\Span}{\mathsf{Span}}
\newcommand{\capa}{\mathbf{cap}}
\newcommand{\Det}{\mathsf{Det}}
\newcommand{\R}{\operatorname{\mathcal{R}}}
\newcommand{\module}{\operatorname{mod}}
\newcommand{\codim}{\operatorname{codim}}
\newcommand\restr[2]{{
  \left.\kern-\nulldelimiterspace 
  #1 
  \vphantom{\big|} 
  \right|_{#2} 
  }}
\newcommand{\onto}{\twoheadrightarrow}
\begin{document}
\title{ Edmonds' problem and the membership problem for orbit semigroups of quiver representations}
\author{Calin Chindris}
\address{University of Missouri-Columbia, Mathematics Department, Columbia, MO, USA}
\email[Calin Chindris]{chindrisc@missouri.edu}

\author{Daniel Kline}
\address{College of the Ozarks, Mathematics Department, Point Lookout, MO, USA}
\email[Daniel Kline]{dkline@cofo.edu}

\date{\today}
\bibliographystyle{amsalpha}
\subjclass[2010]{16G20, 13A50, 14L24}
\keywords{Capacity of quiver data, Edmonds' problem, Edmonds-Rado property, saturated orbit semigroups, semi-invariants of bound quiver algebras, tame algebras}

\begin{abstract}  A central problem in algebraic complexity, posed by J. Edmonds \cite{Edm-1967}, asks to decide if the span of a given $l$-tuple $\V=(\V_1, \ldots, \V_l)$ of $N \times N$ complex matrices contains a non-singular matrix. 

In this paper, we provide a quiver invariant theoretic approach to this problem. Viewing $\V$ as a representation of the $l$-Kronecker quiver $\K_l$, Edmonds' problem can be rephrased as asking to decide if there exists a semi-invariant on the representation space $(\CC^{N\times N})^l$ of weight $(1,-1)$ that does not vanish at $\V$. In other words, Edmonds' problem is asking to decide if the weight $(1,-1)$ belongs to the orbit semigroup of $\V$.

Let $Q$ be an arbitrary acyclic quiver and $\V$ a representation of $Q$. We study the membership problem for the orbit semi-group of $\V$ by focusing on the so-called $\V$-saturated weights. We first show that for any given $\V$-saturated weight $\sigma$, checking if $\sigma$ belongs to the orbit semigroup of $\V$ can be done in deterministic polynomial time. 

Next, let $(Q, \R)$ be an acyclic bound quiver with bound quiver algebra $A=KQ/\langle \R \rangle$ and assume that $\V$ satisfies the relations in $\R$. We show that if $A/\Ann_A(\V)$ is a tame algebra then any weight $\sigma$ in the weight semigroup of $\V$ is $\V$-saturated. 

Our results provide a systematic way of producing families of tuples of matrices for which Edmonds' problem can be solved effectively.  
\end{abstract}

\maketitle
\setcounter{tocdepth}{1}
\tableofcontents

\section{Introduction}

\subsection{Motivation} \label{intro-sec} In \cite{Edm-1967}, Edmonds posed the following problem: Given an $l$-tuple of $N \times N$ matrices $\V=(\V_1, \dots, \V_l) \in (\CC^{N \times N})^l$, decide if $\Span_{\CC}(\V_1, \ldots, \V_l)$ contains a non-singular $N \times N$ matrix. The deterministic complexity of this problem plays a central role in algebraic complexity theory. In fact, according to \cite{Kab-Imp-2004}, the existence of a deterministic polynomial time algorithm for Edmonds' Problem implies non-trivial arithmetic circuit lower bounds. 

Our goal in this paper is to study Edmonds' problem within the framework of quiver invariant theory. To capture the classical Edmonds' problem, we begin by encoding the $l$-tuple of matrices as a representation $\V$ of the generalized $l$-Kronecker quiver, as follows:
$$\mathcal{K}_l:~
\vcenter{\hbox{  
\begin{tikzpicture}[point/.style={shape=circle, fill=black, scale=.3pt,outer sep=3pt},>=latex]
   \node[point,label={left:$1$}] (1) at (0,0) {};
   \node[point,label={right:$2$}] (2) at (3.5,0) {};
  
   \draw[dotted] (1.75,.09)--(1.75,-.09);
  
   \path[->]
   (1) edge [bend left=70] node[midway, above] {$a_1$} (2)
   (1) edge [bend left=40] node[midway, below] {$a_2$} (2)
   (1) edge [bend right=40]  node[midway, above] {$a_{l-1}$} (2)
   (1) edge [bend right=70] node[midway, below] {$a_l$} (2);
\end{tikzpicture} 
}}
\hspace{30pt}
\V:~
\vcenter{\hbox{
\begin{tikzpicture}[point/.style={shape=circle, fill=black, scale=.3pt,outer sep=3pt},>=latex]
   \node[point,label={left:$\CC^N$}] (1) at (0,0) {};
   \node[point,label={right:$\CC^{N}$}] (2) at (3.5,0) {};
  
   \draw[dotted] (1.75,.09)--(1.75,-.09);
  
   \path[->]
   (1) edge [bend left=70] node[midway, above] {$\V_1$} (2)
   (1) edge [bend left=40] node[midway, below] {$\V_2$} (2)
   (1) edge [bend right=40]  node[midway, above] {$\V_{l-1}$} (2)
   (1) edge [bend right=70] node[midway, below] {$\V_l$} (2);
\end{tikzpicture} 
}}
$$

Let us next consider the \emph{orbit semigroup} of $\V$ defined by
\begin{alignat*}{2}
\s_{\mathcal{K}_l}(\V) & :=\Biggl\{ \sigma=(\sigma_1, \sigma_2) \in \ZZ^2 &&\;\Bigg|\; \pctext{3in}{$\exists$ a polynomial function $f$ on $(\CC^{N \times N})^l$ such that $f(\V) \neq 0$ and $g \cdot f = \det(g_1)^{\sigma_1}\det(g_2)^{\sigma_2}f$ for all $g=(g_1,g_2) \in \GL(N, \CC)\times \GL(N, \CC)$}   \Biggr\}, \\
\end{alignat*}
where the base change group $\GL(N, \CC)\times \GL(N, \CC)$ acts on $(\CC^{N \times N})^l$ by simultaneous conjugation. Using the First Fundamental Theorem (FFT) for quiver semi-invariants (see Section \ref{semi-inv-FFT-orbit-semigr-sec} for details), Edmonds' problem can be rephrased as asking to \emph{decide if the weight $(1,-1)$ belongs to the orbit semigroup $\s_{\mathcal{K}_l}(\V)$}. 

In this paper, we use the formalism of quiver invariant theory to study the membership problem for orbit semigroups of representations of arbitrary acyclic quivers. In particular, this approach provides a systematic way of constructing infinite families of large tuples of matrices for which Edmonds' problem can be solved effectively.

\subsection{Quiver formulation of Edmonds' problem} We briefly recall just enough terminology to state our quiver version of Edmonds' problem and the main results. (More detailed background can be found in Section \ref{background-sec}.) 

Let $Q$ be a connected acyclic quiver with set of vertices $Q_0$ and set of arrows $Q_1$. For an arrow $a \in Q_1$, we denote by $ta$ and $ha$, its tail and head, respectively. We represent $Q$ as a directed graph with set of vertices $Q_0$ and directed edges $a:ta \to ha$ for every $a \in Q_1$. A representation $\V$ of $Q$ assigns a finite-dimensional complex vector space $\V(x)$ to every vertex $x \in Q_0$ and a $\CC$-linear map $\V(a): \V(ta) \to \V(ha)$ to every arrow $a \in Q_1$. After fixing bases for the vector spaces $\V(x)$, $x \in Q_0$, we often think of the linear maps $\V(a)$, $a \in Q_1$, as matrices of appropriate size. The dimension vector of a representation $\V$ of $Q$ is $\ddim \V:=(\dim_{\CC} \V(x))_{x \in Q_0} \in \NN^{Q_0}$.

Let $\beta \in \NN^{Q_0}$ be a dimension vector and $\sigma \in \ZZ^{Q_0}$ a non-zero integral weight such that 
$$
\sigma \cdot \beta:=\sum_{x \in Q_0} \sigma(x) \beta(x)=0.
$$ 
Let $v_1, \ldots, v_n$ be the vertices of $Q$ where $\sigma$ takes positive values, and let $w_1, \ldots, w_m$ be the vertices of $Q$ where $\sigma$ takes negative values. Define
$$\sigma_{+}(v_i)=\sigma(v_i), \forall i \in [n], \text{~and~}
\sigma_{-}(w_j)=-\sigma(w_j), \forall j \in [m].
$$
For each $j \in [m]$ and $i \in [n]$, define 
$$
\jj:=\{q \in \ZZ \mid \sum_{k=1}^{j-1} \sigma_{-}(w_k) < q \leq  \sum_{k=1}^{j} \sigma_{-}(w_k) \},
$$
and 
$$
\ii:=\{r \in \ZZ \mid \sum_{k=1}^{i-1} \sigma_{+}(v_k) < r \leq  \sum_{k=1}^{i} \sigma_{+}(v_k) \}.
$$

Now let $\V$ be a $\beta$-dimensional representation of $Q$ with $\V(x)=\CC^{\beta(x)}$, $\forall x \in Q_0$, and $\V(a) \in \CC^{\beta(ha)\times \beta(ta)}$, $a \in Q_1$. To state our formulation of Edmonds' problem for the quiver datum $(\V, \sigma)$, we will work with large tuples of matrices indexed by the following set
$$
\I_{\sigma}=\{ (i,j,p,q,r)  \mid i \in [n], j \in [m], p \in \Ar_{i,j}, q \in \jj, r \in \ii \}.
$$
Here, $\Ar_{i,j}$ denotes the set of all oriented paths in $Q$ from $v_i$ to $w_j$ for all $i \in [m]$ and $j \in [m]$. 

Let $M:=\sum_{j=1}^m \sigma_{-}(w_j)$ and $M':=\sum_{i=1}^n \sigma_{+}(v_i)$. For every index $(i, j, p, q, r) \in \I_{\sigma}$, let $\V^{i,j,p}_{q,r}$ be the $M \times M'$ block matrix whose $(q,r)$-block-entry is $\V(p) \in \CC^{\beta(w_j)\times \beta(v_i)}$, and all other entries are zero matrices of appropriate size. All these block matrices are square matrices of size $N \times N$ where $N:=\sum_{i=1}^n \sigma_{+}(v_i)\beta(v_i)=\sum_{j=1}^m \sigma_{-}(w_j)\beta(w_j)$.

\begin{prob}[\textbf{Edmonds' problem for quiver data}] Given a quiver datum $(\V, \sigma)$ with $\V=(\V(a))_{a \in Q_1} \in \prod_{a \in Q_1} \CC^{\beta(ha)\times \beta(ta)}$, decide if $\Span_{\CC}(W^{i,j,p}_{q,r} \mid (i,j,p,q,r) \in \I_{\sigma})$ contains a $N \times N$ non-singular matrix.
\end{prob}

For example, if $\mathcal{K}_l$ is the $l$-Kronecker quiver, $\V=(\V_1, \ldots, \V_l) \in (\CC^{N \times N})^l$, and $\sigma=(1,-1)$ then the  matrices $\V^{i,j,p}_{q,r}$ are precisely the matrices $\V_1, \ldots, \V_l$. Thus Edmonds' problem for the quiver datum $(\V, (1,-1))$ over $\mathcal{K}_l$ is the classical formulation of Edmonds' problem.

Going back to the general situation, the orbit semigroup of $\V$, denoted by $\s_Q(\V)$, is the affine semigroup consisting of all weights $\sigma \in \ZZ^{Q_0}$ such that there exists a semi-invariant of weight $\sigma$ that does not vanish at $\V$. Then, via the FFT for quiver semi-invariants (see Section \ref{semi-inv-FFT-orbit-semigr-sec}), we have the following description 
$$
\s_Q(\V)=\left \{ \sigma \in \ZZ^{Q_0} \;\middle|\;
\begin{array}{l}
\sigma \cdot \beta=0 \text{~and~}\Span_{\CC}(W^{i,j,p}_{q,r} \mid (i,j,p,q,r) \in \I_{\sigma})\\
\text{~contains an~} N \times N \text{~non-singular matrix~} 
\end{array} \right \}
$$
Thus the \emph{membership problem for orbit semigroups} of quiver representations and \emph{Edmonds' problem} for quiver data are equivalent.

The following concept plays a key role in our approach. We say that a weight $\sigma \in \ZZ^{Q_0}$ is \emph{$\V$-saturated} if whenever $n \sigma \in \s_Q(\V)$ for some integer $n \geq 1$ then $\sigma$ also lies in $\s_Q(\V)$. Following L. Gurvits \cite{Gur-2004}, we say that the datum $(\V, \sigma)$ has the \emph{Edmonds-Rado Property (``ERP'')} if the tuple of matrices  $\A_{\V, \sigma}:=\left( \V^{i,j,p}_{q,r} : (i,j,p,q,r) \in \I_{\sigma} \right)$ has the Edmonds-Rado property, meaning that the existence of a non-singular matrix in $\Span_{\CC}(\A_{\V, \sigma})$ is equivalent to the capacity of the completely positive operator associated to $\A_{\V, \sigma}$ being positive. In that paper, Gurvits has found a deterministic polynomial time algorithm for testing the positivity of any completely positive operator, thus proving that Edmonds' problem can be solved effectively for any ERP tuple of matrices.

Our first result shows that for a quiver datum $(\V, \sigma)$, the weight $\sigma$ is $\V$-saturated if and only if $(\V, \sigma)$ is an ERP datum. In what follows, the capacity of an arbitrary quiver datum $(\V, \sigma)$ is denoted by  $\capa_{Q}(\V, \sigma)$ (see Definition \ref{defn-cap-quiver-datum}). The following result has been proved in \cite{ChiDer-2019} for bipartite quivers (over the field of real numbers). Here we explain how to extend it to arbitrary acyclic quivers.

\begin{theorem}[\textbf{Checking membership to orbit semigroups}] \label{membership-thm-1} Let $Q$ be a connected acyclic quiver and $(\V, \sigma)$ a quiver datum. Then the following statements are equivalent:
\begin{enumerate}[(i)]
\item $n \sigma \in \s_Q(\V)$ for some integer $n \geq 1$;
\item $ \capa_Q(\V, \sigma)>0$.
\end{enumerate} 

Consequently, $\sigma$ is $\V$-saturated if and only if $(\V, \sigma)$ is an ERP datum, and hence Algorithm G in \cite[Corollary 3.17]{GarGurOliWig-2018} yields a deterministic polynomial time algorithm to check if $\sigma \in \s_Q(\V)$ for any $\V$-saturated weight $\sigma$.
\end{theorem}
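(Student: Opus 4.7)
The plan is to reduce the statement, over an arbitrary connected acyclic quiver $Q$, to the bipartite case established in \cite{ChiDer-2019}, and then to verify that the reduction also handles the passage from the real to the complex base field used there.

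First, from the quiver datum $(\V, \sigma)$ I would construct an auxiliary bipartite quiver $\tilde Q$ whose source vertices are the positive support $v_1,\dots,v_n$ of $\sigma$, whose sink vertices are the negative support $w_1,\dots,w_m$ of $\sigma$, and whose arrows $\tilde a_p : v_i \to w_j$ are indexed by the oriented paths $p \in \Ar_{i,j}$ in $Q$. I set $\tilde \V(\tilde a_p) := \V(p)$ (the path composition), $\tilde\beta(v_i) := \beta(v_i)$, $\tilde\beta(w_j) := \beta(w_j)$, and I let $\tilde\sigma$ be the restriction of $\sigma$. By construction, the index set $\I_\sigma$ and the matrices $\V^{i,j,p}_{q,r}$ depend on $\V$ only through the path matrices $\V(p)$, so $\A_{\V,\sigma} = \A_{\tilde \V,\tilde\sigma}$ as ordered tuples of $N \times N$ matrices; in particular $\capa_Q(\V,\sigma) = \capa_{\tilde Q}(\tilde \V,\tilde\sigma)$. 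On the semi-invariant side, the First Fundamental Theorem of Derksen--Weyman / Schofield--Van den Bergh expresses every $n\sigma$-semi-invariant on $\rep(Q,\beta)$ as a polynomial in Schofield determinants built from compositions of arrow matrices along paths joining $\sigma$-positive to $\sigma$-negative vertices; evaluated at $\V$, such determinants depend only on the $\V(p)$'s, so they identify with the $n\tilde\sigma$-semi-invariants of $\tilde \V$ on $\tilde Q$. Hence $n\sigma \in \s_Q(\V)$ if and only if $n\tilde\sigma \in \s_{\tilde Q}(\tilde \V)$ for every $n \geq 1$.

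With these identifications in place, the required equivalence $(i) \Leftrightarrow (ii)$ for $(Q,\V,\sigma)$ reduces to the bipartite statement for $(\tilde Q,\tilde \V,\tilde\sigma)$, which is exactly \cite{ChiDer-2019}. To upgrade that result from $\RR$ to $\CC$, I would observe that the operator-scaling argument behind Algorithm G is indifferent to whether matrix entries are real or complex --- both singularity of a span of matrices and positivity of the capacity of the associated completely positive operator behave uniformly, either directly or via the standard embedding $\CC^{N \times N} \hookrightarrow \RR^{2N \times 2N}$. The ``Consequently'' clause is then formal: combining $(i) \Leftrightarrow (ii)$ with the definitions of $\V$-saturatedness and of ERP yields the stated equivalence, and the promised algorithm is obtained by running Algorithm G of \cite{GarGurOliWig-2018} on the tuple $\A_{\V,\sigma}$.

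The main technical obstacle I anticipate is the First Fundamental Theorem step: one must set up the natural map between the spaces of $n\sigma$-semi-invariants for $Q$ and $n\tilde\sigma$-semi-invariants for $\tilde Q$ precisely enough to guarantee that it is non-vanishing at $\V$ exactly when it is non-vanishing at $\tilde \V$. Acyclicity of $Q$ is essential here, to ensure finiteness of the path sets $\Ar_{i,j}$ and compatibility of the path-composition operation with Schofield's determinantal construction; once this compatibility is correctly arranged, everything else reduces to bookkeeping together with an appeal to the bipartite case.
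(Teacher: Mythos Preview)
Your proposal is correct and follows essentially the same route as the paper: the paper constructs the same bipartite quiver (denoted $Q^{\sigma}$) and representation $\V^{\sigma}=\tilde\V$, proves via the FFT that the induced map $\varphi^{\#}:\SI(Q^{\sigma},\beta)_{l\sigma}\to\SI(Q,\beta)_{l\sigma}$ is surjective (your anticipated ``technical obstacle'', isolated there as a separate proposition), and then combines this with \cite{ChiDer-2019} to get the equivalence of semi-stability for $\V$ and $\V^{\sigma}$; the remainder is the same formal deduction you give. The only point you raise that the paper leaves implicit is the $\RR$ versus $\CC$ passage.
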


It is thus  important to have a systematic way of constructing saturated weights (equivalently, ERP quiver data). Working with the concept of saturated weights allows us to bring methods from the invariant theory for finite-dimensional algebras to bear in the context of the Edmonds' problem. For a representation $\V$ of a bound quiver $(Q, \R)$, $\LL_{\V}$ denotes its weight semigroup (for more details, see Definition \ref{weight-semigrp-defn}).

\begin{theorem} \label{main-thm-2} Let $(Q, \R)$ be a connected, acyclic bound quiver with bound quiver algebra $A=\CC Q/ \langle \R \rangle$. Let $\beta \in \NN^{Q_0}$ be a dimension vector and $\V \in \module(A,\beta)$ an $A$-module such that $B:=A/ \Ann_A(\V)$ is a tame algebra. 

Then every weight $\sigma \in \LL_{\V}$ is $\V$-saturated, and hence Edmonds' problem for $(\V,\sigma)$ can be solved in deterministic polynomial time.
\end{theorem}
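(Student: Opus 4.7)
The plan is to combine Theorem~\ref{membership-thm-1} with a saturation property of the weight semigroup $\LL_\V$ that is specific to modules over tame algebras.

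First, I would reformulate the statement. By Theorem~\ref{membership-thm-1}, the property ``$\sigma$ is $\V$-saturated'' is equivalent to the ERP property for the datum $(\V,\sigma)$, i.e.\ to the equivalence $\capa_Q(\V,\sigma)>0 \Leftrightarrow \sigma \in \s_Q(\V)$. Hence it suffices to show that under the tameness hypothesis, any $\sigma \in \LL_\V$ with $\capa_Q(\V,\sigma)>0$ already lies in $\s_Q(\V)$. Since restriction of semi-invariants from $\rep(Q,\beta)$ to $\module(A,\beta)$ gives the inclusion $\s_Q(\V)\subseteq \LL_\V$ automatically, what is really needed is a saturation-type statement for $\LL_\V$ relative to $\s_Q(\V)$: namely $\LL_\V \cap \operatorname{sat}(\s_Q(\V)) \subseteq \s_Q(\V)$.

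Second, I would exploit that $\V$ factors through $B=A/\Ann_A(\V)$, so $\V$ is faithful as a $B$-module with $B$ tame. The main input would be a saturation theorem for weight semigroups of faithful modules over tame algebras, which I would prove (or cite from a previous section) using the representation-theoretic control offered by tameness --- bounded indecomposables, one-parameter families, Auslander--Reiten theory --- together with Schofield's construction of semi-invariants from minimal projective presentations of $B$-modules. Concretely, given $\sigma \in \LL_\V$ with $n\sigma \in \s_Q(\V)$, the FFT for quiver semi-invariants produces a Schofield semi-invariant $c^M$ of weight $n\sigma$ on $\rep(Q,\beta)$ with $c^M(\V)\ne 0$. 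The tameness of $B$ should allow us to ``divide'' in the weight lattice: because $B$-module families along which $c^M$ can be built come in bounded, one-parameter families, one can extract from $M$ a virtual $B$-module $M'$ of weight $\sigma_{M'}=\sigma$ with $c^{M'}(\V)\ne 0$, placing $\sigma \in \s_Q(\V)$. The algorithmic conclusion then follows by applying Gurvits' Algorithm~G, as in Theorem~\ref{membership-thm-1}.

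The main obstacle is precisely this saturation step: converting a non-vanishing semi-invariant of weight $n\sigma$ into one of weight $\sigma$. I expect this to rest on a detailed analysis of the moduli spaces of $\sigma$-semi-stable $B$-modules --- which are well-behaved (often rational and of bounded complexity) for tame $B$ --- and on the structure of Schofield semi-invariants parametrized by virtual $B$-modules. The passage from weight $n\sigma$ to weight $\sigma$ \emph{without} increasing geometric complexity is exactly the point at which tameness is genuinely used and at which the argument would fail for wild~$B$.
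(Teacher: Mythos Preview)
Your reduction is correct: the task is exactly to show that if $\sigma\in\LL_\V$ and $n\sigma\in\s_Q(\V)$ for some $n\ge 1$, then $\sigma\in\s_Q(\V)$; and passing to the faithful tame quotient $B=A/\Ann_A(\V)$ is the right first move. The overall shape you sketch---start from a Schofield semi-invariant $c^M$ of weight $n\sigma$ not vanishing at $\V$, and ``extract'' from $M$ a module $M'$ of weight $\sigma$ with $c^{M'}(\V)\neq 0$---is also the shape of the paper's argument. But your proposal does not supply the mechanism that makes this extraction work, and the mechanisms you gesture at (moduli of $\sigma$-semi-stable modules, ``virtual'' $B$-modules, rationality/bounded complexity of moduli for tame $B$) are not what is actually used and do not obviously lead anywhere.

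The missing idea is this. The hypothesis $\sigma\in\LL_\V$ already hands you a specific dimension vector $\alpha$ with $\restr{\sigma}{\supp(\beta)}=\langle\alpha,-\rangle_B$ and $\p_B(\alpha)\neq\emptyset$; then $n\sigma=\langle n\alpha,-\rangle_B$, and by Proposition~\ref{orbit-semigrp-homological-prop} the condition $n\sigma\in\s_Q(\V)$ says the \emph{generic} module in the irreducible component $\C(n\alpha)$ is Hom/Ext-orthogonal to $\V$. The paper's key technical input is Proposition~\ref{main-prop}: for tame $B$, if $\C(\alpha)=\overline{\C(\alpha_1)\oplus\cdots\oplus\C(\alpha_l)}$ is the Crawley-Boevey--Schr{\"o}er generic decomposition, then $\C(n\alpha)=\overline{\C(\alpha_1)^{\oplus n}\oplus\cdots\oplus\C(\alpha_l)^{\oplus n}}$. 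This is where tameness enters concretely, via Crawley-Boevey's homogeneity theorem and Auslander--Reiten duality, which force $\ext^1_B(\C(\alpha_i),\C(\alpha_i))=0$ for each indecomposable summand. Once you know this, the extraction is trivial: a generic orthogonal module in $\C(n\alpha)$ is a direct sum $\bigoplus_{i,j}V^i_j$ with $V^i_j\in\p_B(\alpha_i)$, and picking one copy from each block, $V=V^1_1\oplus\cdots\oplus V^l_1\in\p_B(\alpha)$, gives the desired orthogonal module of dimension $\alpha$, hence $\sigma\in\s_Q(\V)$. Your proposal never isolates the generic-decomposition comparison $\C(n\alpha)$ versus $\C(\alpha)$, which is the whole point; without it the ``division in the weight lattice'' remains a wish rather than an argument.
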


Since any quotient of a tame algebra is tame, as a particular case of Theorem \ref{main-thm-2}, we obtain that for any tame algebra $A$ and $A$-module $\V$, every weight in $\LL_{\V}$ is $\V$-saturated. Going beyond the class of tame algebras, we point out that there exist wild (but Schur-tame) algebras $A$ such that every weight in the weight semigroup of a Schur module is saturated. In \cite{Gur-2004}, Gurvits has found two classes of matrix spaces that have the Edmonds-Rado property (see also \cite{IKQS15}). Namely, the class $\mathbf{R_1}$ of rank-1 spanned matrix spaces, and the class $\mathbf{UT}$ of (upper) triangularizable matrix spaces. The ERP tuples of matrices arising from Theorem \ref{main-thm-2} and \cite[Theorem 1]{Chi-orbit-semigr-2009} go beyond the two classes $\mathbf{R1}$ and $\mathbf{UT}$. 

The proof of Theorem \ref{main-thm-2} is based on the FFT for quiver semi-invariants, and the generic decomposition of irreducible components of module varieties due to V. Kac \cite{Kac}, and W. Crawley-Boevey and J. Schr{\"o}er \cite{C-BS}. More specifically, we are interested in irreducible components of the following form. Let $(Q, \R)$ be an acyclic bound quiver with bound quiver algebra $A=\CC Q/\langle \R \rangle$. For a dimension vector $\alpha \in \NN^{Q_0}$ for which there exists an $A$-module of projective dimension at most one, $\C(\alpha)$ denotes the unique irreducible component of $\module(A, \alpha)$ whose generic module has projective dimension at most one (see \cite{GeiSch}). The following result, which is interesting in itself, plays a key role in the proof of Theorem \ref{main-thm-2}.

\begin{prop} \label{main-prop} Let $A=\CC Q/\langle \R \rangle$ be a connected, acyclic, tame bound quiver algebra and let $\C(\alpha) \subseteq \module(A, \alpha)$ be an irreducible component whose generic module has projective dimension at most one.
\begin{enumerate}[(i)]
\item If $\C(\alpha)$ is an indecomposable irreducible component then
$$
\ext_A^1(\C(\alpha), \C(\alpha))=0.
$$

\item Let
$$
\C(\alpha)=\overline{\C(\alpha_1) \oplus \ldots \oplus \C(\alpha_l)}
$$
be the generic decomposition of $\C(\alpha)$. Then, for any integer $n>0$, the generic decomposition of $\C(n \alpha)$ is
$$
\C(n\alpha)=\overline{\C(\alpha_1)^{\oplus n} \oplus \ldots \oplus \C(\alpha_l)^{\oplus n}}.
$$

\item The dimension of $\C(\alpha)$ is equal to the dimension of $\GL(\alpha)$ if and only if none of the indecomposable irreducible components that occur in the generic decomposition of $\C(\alpha)$ is an orbit closure.
\end{enumerate}
\end{prop}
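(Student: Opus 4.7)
The plan throughout is to combine the Voigt-type formula
$$
\dim \C(\alpha) \;=\; \dim \GL(\alpha) \;-\; \dim \End_A(M) \;+\; \dim \Ext^1_A(M,M),
$$
which holds for a generic $M\in\C(\alpha)$ by the pdim $\le 1$ hypothesis, with the Crawley-Boevey--Schr\"oer criterion for generic decompositions and the structural consequences of the tameness of $A$.

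For (i), tameness forces the number of parameters of indecomposable modules of dimension vector $\alpha$ to be at most one, so the formula above yields $\dim \Ext^1_A(M,M)\le 1$ for a generic $M\in\C(\alpha)$. If this dimension is $0$, then $M$ is rigid and $\C(\alpha)=\overline{\GL(\alpha)\cdot M}$, so the claim is trivial. Otherwise $\C(\alpha)$ is a one-parameter family of (necessarily brick) indecomposables, and the structure theory of tame algebras gives that generic distinct modules in such a family are completely orthogonal, i.e.\ $\Hom_A=\Ext^1_A=0$ between them; this yields $\ext^1_A(\C(\alpha),\C(\alpha))=0$. Part (ii) then follows from the Crawley-Boevey--Schr\"oer criterion: pairwise $\ext^1$-vanishing across distinct summands is given by the hypothesis, and the missing self-$\ext^1$-vanishing within each repeated copy is supplied by (i). Hence $\overline{\C(\alpha_1)^{\oplus n}\oplus\cdots\oplus\C(\alpha_l)^{\oplus n}}$ is an irreducible component of $\module(A,n\alpha)$; its generic module has pdim $\le 1$ (being a direct sum of such modules), and the uniqueness of the pdim $\le 1$ component at dimension $n\alpha$ (Geiss--Schr\"oer) identifies it with $\C(n\alpha)$.

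For (iii), pick a generic $M=M_1\oplus\cdots\oplus M_l\in\C(\alpha)$ with $M_i$ generic in $\C(\alpha_i)$ and expand $\dim \End_A(M)-\dim \Ext^1_A(M,M)$ block by block. All $\Ext^1$ terms between distinct summands (whether from different components or different copies of a one-parameter component) vanish by (i) and the Crawley-Boevey--Schr\"oer criterion, so the $\Ext^1$ side collapses to the total number of one-parameter summands (counted with multiplicity). On the $\End$ side, one-parameter bricks contribute $1$ each (matching the $\Ext^1$ side and cancelling), orbit-closure components of multiplicity $n_i$ contribute a full endomorphism block of dimension $n_i^2\dim \End_A(M_i)\ge 1$, and distinct components contribute cross-$\Hom$ blocks $n_in_j\dim \Hom_A(M_i,M_j)\ge 0$. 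The resulting difference is a sum of non-negative terms that is strictly positive as soon as some $\C(\alpha_i)$ is an orbit closure, giving the forward implication; conversely, when every summand is one-parameter, tameness makes distinct one-parameter components generically $\Hom$-orthogonal and the difference vanishes, so $\dim \C(\alpha)=\dim \GL(\alpha)$. The main technical obstacle is the tame orthogonality input used in both (i) and (iii): that one-parameter families of indecomposables in a tame algebra consist of generically orthogonal modules, and that distinct one-parameter components appearing in a valid generic decomposition are generically $\Hom$-orthogonal. These are folklore consequences of the Auslander-Reiten theory of tame algebras (pairwise orthogonal quasi-simples of tubular families) and supply the non-formal weight of the argument.
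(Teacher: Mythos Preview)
Your overall architecture matches the paper's: part (i) via the Voigt formula plus an orthogonality statement for one-parameter families, part (ii) via the Crawley-Boevey--Schr\"oer criterion plus uniqueness of the $\pdim\le 1$ component, and part (iii) via a block decomposition of $\dim\End-\dim\Ext^1$. Parts (ii) and (iii) are essentially the paper's arguments (the paper phrases (iii) through the generic-parameter count $\mu_g(\C(\alpha))=\sum_i\mu_g(\C(\alpha_i))$, but this unwinds to your computation).

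The gap is exactly where you flag it: the ``folklore'' orthogonality input in (i) and (iii). Your appeal to ``pairwise orthogonal quasi-simples of tubular families'' is the right intuition for tame hereditary or concealed-canonical algebras, but for an \emph{arbitrary} acyclic tame bound quiver algebra there is no a priori tube structure to invoke, and neither the brick property (``necessarily brick'') nor the $\Hom$/$\Ext^1$-orthogonality of distinct members of a one-parameter family is automatic. The paper supplies precisely this missing content (its Lemma preceding the proposition): Crawley-Boevey's homogeneity theorem gives $\tau_A V_\lambda\simeq V_\lambda$ for almost all $\lambda$; combining this with $\pdim V_\lambda\le 1$, Auslander--Reiten duality, and the Voigt count $\dim\Ext^1_A(V_\lambda,V_\lambda)=1$ yields $\dim\End_A(V_\lambda)=1$, i.e.\ brickness; then each $V_\lambda$ is $\theta$-stable for $\theta=\langle\alpha,-\rangle_A$, whence non-isomorphic $V_\lambda,V_{\lambda'}$ are $\Hom$-orthogonal, and AR duality plus homogeneity converts this into $\Ext^1$-orthogonality. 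The same homogeneity + AR duality step is what justifies, in your (iii) backward direction, that distinct one-parameter summands in the generic decomposition are generically $\Hom$-orthogonal (from $\ext^1=0$ and $\tau$-invariance). So your proof becomes complete once you replace the folklore citation by Crawley-Boevey's homogeneity theorem and the $\theta$-stability/AR-duality chain; without that, the argument does not go through for general tame $A$.
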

\noindent

We point out that Proposition \ref{main-prop}{(ii)} can be viewed as a generalization of \cite[Theorem 3.8]{S1} while part (iii) can be viewed as a generalization of \cite[Propositon 4.3]{CarChiKinWey2017} and \cite[Theorem 1.6]{GeiLabSch-2020} to arbitrary acyclic tame algebras.

\bigskip
\noindent
\textbf{Acknowledgements:} The authors would like to thank Ryan Kinser for helpful discussions on the subject and for bringing \cite{GeiLabSch-2020} to their attention.

\section{Background on quiver invariant theory} \label{background-sec}
\subsection{Representations of bound quivers}
Throughout, we work over the field $K=\CC$ of complex numbers and denote by $\NN=\{0,1,\dots \}$. For a positive integer $L$, we denote by $[L]=\{1, \ldots, L\}$. All algebras are assumed to be bound quiver algebras, and all modules are assumed to be finite-dimensional left modules.

A \emph{quiver} $Q=(Q_0,Q_1,t,h)$ consists of two finite sets $Q_0$ (\emph{vertices}) and $Q_1$ (\emph{arrows}) together with two maps $t:Q_1 \to Q_0$ (\emph{tail}) and $h:Q_1 \to Q_0$ (\emph{head}). We represent $Q$ as a directed graph with set of vertices $Q_0$ and directed edges $a:ta \to ha$ for every $a \in Q_1$. Throughout we assume that our quivers are connected, meaning that the underlying graph of $Q$ is connected.

A \emph{representation} of $Q$ is a family $V=(V(x), V(a))_{x \in Q_0, a\in Q_1}$ where $V(x)$ is a finite-dimensional $K$-vector space for every $x \in Q_0$, and $V(a): V(ta) \to V(ha)$ is a $K$-linear map for every $a \in Q_1$. A \emph{subrepresentation} $V'$ of $V$, written as $V' \leq V$, is a representation of $Q$ such that $V'(x) \leq_{K} V(x)$ for every $x \in Q_0$, and $V(a)(V'(ta)) \leq V'(ha)$ and $V'(a)$ is the restriction of $V(a)$ to $V(ta)$ for every arrow $a \in Q_1$. 

A morphism $\varphi:V \rightarrow W$ between two representations is a collection $(\varphi(x))_{x \in Q_0}$ of $K$-linear maps with $\varphi(x) \in \Hom_K(V(x), \V(x))$ for each $x \in Q_0$, and such that $\varphi(ha) \circ V(a)=\V(a) \circ \varphi(ta)$ for each $a \in Q_1$. We denote by $\Hom_Q(V,\V)$ the $\CC$-vector space of all morphisms from $V$ to $\V$. The abelian category of all representations of $Q$ is denoted by $\rep(Q)$.

The path algebra $KQ$ of a quiver $Q$ has a $K$-basis consisting of all paths (including the trivial ones), and the multiplication in $KQ$ is given by concatenation of paths. It is easy to see that any $KQ$-module defines a representation of $Q$, and vice-versa. Furthermore, the category $\module(KQ)$ of finite-dimensional $KQ$-modules is equivalent to the category $\rep(Q)$. In fact, we identify $\module(KQ)$ and $\rep(Q)$, and use the same notation for a module and the corresponding representation.

An admissible relation for $Q$ is a finite linear combination of parallel paths where each path has length at least two. A \emph{bound quiver} is a pair $(Q, \R)$ where $Q$ is a quiver and $\R$ is a finite set of admissible relations such that the algebra $KQ/\langle \R \rangle$ is finite-dimensional. 

Let $(Q,\R)$ be a bound quiver and $A=KQ/\langle \R \rangle$ its bound quiver algebra. A representation $V$ of $A$ (or  $(Q,\R)$) is just a representation $V$ of $Q$ such that $V(r)=0$ for all $r \in \R$. The category $\module(A)$ of finite-dimensional left $A$-modules is equivalent to the category $\rep(A)$ of representations of $A$. As before, we identify $\module(A)$ and $\rep(A)$, and make no distinction between $A$-modules and representations of $A$.  For each vertex $x \in Q_0$, we denote by $P_x$ the projective indecomposable cover of the simple $A$-module $S_x$. An $A$-module $V$ is called \emph{Schur} if $\End_A(V) \cong K$. 

By a dimension vector of $A$ (equivalently, of $Q$), we simply mean a $\NN$-valued function on the set of vertices $Q_0$. For two vectors $\theta, \beta \in \RR^{Q_0}$, we define $\theta \cdot \beta=\sum_{x \in Q_0} \theta(x)\beta(x)$. 

From now on we assume that $Q$ has no oriented cycles. Then the \emph{Euler form} of $A$ is the bilinear form  $\langle \cdot, \cdot \rangle_{A} : \ZZ^{Q_0}\times \ZZ^{Q_0} \to \ZZ$ defined by
$$
\langle \alpha,\beta  \rangle_{A}=\sum_{l\geq 0}(-1)^l \sum_{x,y\in Q_0}\dim_K \Ext^l_{A}(S_x,S_y)\alpha(x)\beta(y).
$$
In fact, for any $A$-modules $V$ and $\V$ of dimension vector $\alpha$ and
$\beta$, respectively, one has
$$
\langle \alpha, \beta \rangle_{A}=\sum_{l\geq 0}(-1)^l \dim_K \Ext^l_{A}(V,\V).
$$

\subsection{Module varieties and their irreducible components}\label{repvar-sec} Let $\alpha \in \NN^{Q_0}$ be a dimension vector of $A=KQ/\langle \R \rangle$ (or equivalently, of $Q$). The \emph{representation space} of $\alpha$-dimensional representations of $Q$ is the affine space 
$$\rep(Q,\alpha):=\prod_{a \in Q_1} K^{\alpha(ha)\times \alpha(ta)}.$$ The \emph{module variety} of $\alpha$-dimensional $A$-modules is 
$$\module(A,\alpha):=\{(V(a))_{a \in Q_1} \in \rep(Q,\dd) \mid V(r)=\mathbf{0}, \forall r \in \R\}.$$ It is acted upon by the base change group $$\GL(\alpha):=\prod_{x\in Q_0}\GL(\alpha(x),K)$$ by simultaneous conjugation, i.e., for $g=(g(x))_{x\in Q_0}\in \GL(\alpha)$ and $V=(V(a))_{a \in Q_1} \in \module(Q,\alpha)$,  $g \cdot V$ is defined by 
$$
(g\cdot V)(a)=g(ha)V(a) g(ta)^{-1}, \forall a \in Q_1.
$$ 
The $\GL(\alpha)-$orbits in $\module(A,\alpha)$ are in one-to-one correspondence with the isomorphism classes of the $\alpha$-dimensional $A$-modules. 

In general, $\module(A, \alpha)$ does not have to be irreducible. An irreducible component $C \subseteq \module(A, \alpha)$ is said to be \emph{indecomposable} if $C$ has a non-empty open subset of indecomposable modules. Given a decomposition $\alpha=\alpha_1+\ldots +\alpha_l$ where $\alpha_i \in \NN^{Q_0}, 1 \leq i \leq l$, and $\GL(\alpha_i)$-invariant constructible subsets $C_i\subseteq \module(A,\alpha_i)$, $1 \leq i \leq l$, we denote by $C_1\oplus \ldots \oplus C_l$ the constructible subset of $\module(A,\alpha)$ defined by 
$$
C_1\oplus \ldots \oplus C_l=\{V \in \module(A,\alpha) \mid V\simeq \bigoplus_{i=1}^t V_i\text{~with~} V_i \in C_i, \forall 1 \leq i \leq l\}.
$$

It is proved in \cite[Theorems~1.1 and 1.2]{C-BS} that any irreducible component of a module variety has a Krull-Schmidt type decomposition. Before stating this important result, let us recall that the \emph{generic Ext} between two irreducible components $D$ and $E$ is defined as $\ext_A^1(D,E):=\min \{\dim_K \Ext^1_A(X,Y) \mid (X,Y) \in D \times E\}$.

\begin{theorem} \label{decomp-irr-thm} 
\begin{enumerate}
\item If $C$ is an irreducible component of $\module(A,\alpha)$ then there exist dimension vectors $\alpha_1, \ldots, \alpha_l$ of $A$ such that $\alpha=\alpha_1+\ldots +\alpha_l$ and
$$
C=\overline{C_1\oplus \ldots \oplus C_l}
$$
for some indecomposable irreducible components $C_i\subseteq \module(A,\alpha_i), 1 \leq i \leq l$. Moreover, the indecomposable irreducible components $C_i, 1 \leq i \leq l,$ are uniquely determined by this property, up to reordering. The decomposition $C=\overline{C_1\oplus \ldots \oplus C_l}$ is called the \emph{generic decomposition of $C$}.

\item Conversely, if $C_i \subseteq \module(A,\alpha_i)$, $1 \leq i \leq l$, are indecomposable irreducible components then $\overline{C_1\oplus \ldots \oplus C_l}$ is an irreducible component of $\module(A,\sum_{i=1}^l \alpha_i)$ if and only if $\ext_A^1(C_i,C_j)=0$ for all $1 \leq i \neq j \leq l$.
\end{enumerate}
\end{theorem}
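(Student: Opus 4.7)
My plan is to prove~(1) via a constructibility/Krull--Schmidt argument and~(2) via a dimension count combined with the upper semi-continuity of $\Ext^1$ on pairs of irreducible components. In both parts, the central geometric tool will be the ``direct sum morphism'': for each ordered tuple $(D_1,\ldots,D_l)$ of indecomposable irreducible components $D_i \subseteq \module(A,\alpha_i)$ with $\sum \alpha_i = \alpha$, I consider
$$
\Phi_{D_\bullet}\colon \GL(\alpha) \times D_1 \times \cdots \times D_l \to \module(A,\alpha),\qquad (g,V_1,\ldots,V_l) \mapsto g \cdot (V_1 \oplus \cdots \oplus V_l),
$$
whose image $\Sigma_{D_\bullet}$ is constructible by Chevalley's theorem.

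For part~(1), the first step is to note that $\module(A,\alpha')$ is Noetherian for each $\alpha' \leq \alpha$ (being closed in the affine space $\rep(Q,\alpha')$), hence has only finitely many irreducible components, so the set of tuples $(D_1,\ldots,D_l)$ in play is finite. Since every $A$-module admits a Krull--Schmidt decomposition, $\module(A,\alpha) = \bigcup_{D_\bullet} \Sigma_{D_\bullet}$, and any irreducible component $C$ is a finite union of the constructible sets $C \cap \Sigma_{D_\bullet}$. Irreducibility of $C$ then forces one such piece to be dense in $C$, producing the generic decomposition $C = \overline{D_1 \oplus \cdots \oplus D_l}$. Uniqueness up to reordering follows by applying classical Krull--Schmidt to any module in the common open dense locus of two purported decompositions.

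For part~(2), direction $(\Rightarrow)$ is the easier one. Suppose $C := \overline{C_1 \oplus \cdots \oplus C_l}$ is an irreducible component of $\module(A,\alpha)$ with $\alpha = \sum \alpha_i$, but $\ext^1_A(C_i,C_j) > 0$ for some $i \neq j$. Pairing a non-split extension of a generic module in $C_j$ by a generic module in $C_i$ with generic representatives of the other components yields a constructible family of $\alpha$-dimensional modules no member of which is a direct sum of modules drawn from the $C_k$'s; its closure is irreducible, meets $C$, and strictly contains $C_1 \oplus \cdots \oplus C_l$, contradicting that $C$ is an irreducible component.

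The main work is direction $(\Leftarrow)$. Fixing a generic $V = V_1 \oplus \cdots \oplus V_l$ in $\overline{\Ima \Phi_{C_\bullet}}$, I would compare $\dim C$ with $\dim T_V \module(A,\alpha)$ via Voigt's lemma
$$
\dim T_V \module(A,\alpha) - \dim \GL(\alpha) \cdot V \leq \dim_K \Ext^1_A(V,V).
$$
The additivity $\Ext^1_A(V,V) = \bigoplus_{i,j} \Ext^1_A(V_i,V_j)$ together with the hypothesis $\ext^1_A(C_i,C_j) = 0$ for $i \neq j$ collapses the right-hand side generically to $\sum_i \ext^1_A(C_i,C_i)$, while a fiber-dimension calculation for $\Phi_{C_\bullet}$ shows that $\dim C$ already realizes this upper bound; hence every irreducible subvariety of $\module(A,\alpha)$ through $V$ coincides with $C$. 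The main obstacle I expect is making this dimension count rigorous in the presence of singularities of $\module(A,\alpha)$: Voigt's lemma only bounds the tangent space, and to pass from that bound to the dimension of an actual component one must argue, via the semi-continuity of $\ext^1_A$ on pairs of irreducible components, that the $\Ext^2_A$-valued obstructions do not further cut the dimension on a dense subset of $C$. Setting up this semi-continuity is what allows $\ext^1_A(C_i,C_j)$ to be defined in the first place and is the technical heart of the proof.
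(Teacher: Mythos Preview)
The paper does not give its own proof of this theorem: it is quoted verbatim as background from Crawley-Boevey and Schr{\"o}er \cite[Theorems~1.1 and 1.2]{C-BS}, with no argument supplied. So there is nothing in the present paper to compare your attempt against.

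That said, your outline is broadly in the spirit of the original \cite{C-BS} argument. A couple of remarks. For part~(1), your constructibility/Krull--Schmidt sketch is essentially the standard one and is fine. For the $(\Rightarrow)$ direction of part~(2), your phrasing is a bit loose: you assert that the closure of your family of non-split extensions ``strictly contains $C_1\oplus\cdots\oplus C_l$'', but what you actually get is that a \emph{single} generic direct sum $V_1\oplus\cdots\oplus V_l$ lies in the orbit closure of the corresponding extension module, hence in some irreducible component $D$ containing that extension; since this direct sum is generic in $C$, one concludes $C\subseteq D$, and then one must argue (e.g.\ via the uniqueness in part~(1), or a dimension/orbit argument) that $D\neq C$ to reach a contradiction. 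Your formulation skips this step. For $(\Leftarrow)$, your plan via Voigt's lemma, upper semicontinuity of $\Ext^1$, and a fibre-dimension count for the direct-sum map is exactly the mechanism used in \cite{C-BS}; the scheme-theoretic subtlety you flag (tangent spaces versus local dimension at possibly singular points) is real and is handled there by working with the scheme $\underline{\underline{\module}}(A,\alpha)$ rather than the reduced variety.
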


We will be particularly interested in irreducible components of the following form. For an $A$-module $V$, we denote by $\pdim V$ the \emph{projective dimension} of $V$. For a dimension vector $\alpha \in \NN^{Q_0}$, consider the following (possibly empty) set 
$$
\p_A(\alpha):=\{ V \in \module(A,\alpha) \mid \pdim_A V \leq 1\} \subseteq \module(A, \alpha),
$$
and set $\C(\alpha):=\overline{\p_A(\alpha)}$. 

\begin{prop} \label{irr-comp-proj-1-prop} (see \cite{GeiSch}) Let $\alpha \in \NN^{Q_0}$ be a dimension vector such that $\p_A(\alpha) \neq \emptyset$. Then $\p_A(\alpha)$ is an irreducible open subset of $\module(A, \alpha)$, and thus $\C(\alpha)$ is an irreducible component of $\module(A, \alpha)$.
\end{prop}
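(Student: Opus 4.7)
My plan is to show that $\p_A(\alpha)$ is both irreducible and open in $\module(A, \alpha)$, after which $\C(\alpha) = \overline{\p_A(\alpha)}$ is automatically an irreducible component (a non-empty irreducible open subset is dense in a unique irreducible component). To set up the geometry, I would first fix a pair $(s_0, t_0) \in \NN^{Q_0} \times \NN^{Q_0}$ subject to $t_0 \geq \alpha$ componentwise and $C_A(t_0 - s_0) = \alpha$, where $C_A = (\dim_K e_y A e_x)_{y,x \in Q_0}$ is the Cartan matrix of $A$. Since $Q$ is acyclic, $C_A$ is upper-unitriangular in any topological ordering of the vertices and hence invertible over $\ZZ$, so $s_0 = t_0 - C_A^{-1}\alpha$ is uniquely determined and is non-negative provided $t_0$ is taken large enough. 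Put $P_0 := \bigoplus_{x \in Q_0} P_x^{t_{0,x}}$ and $P_1 := \bigoplus_{x \in Q_0} P_x^{s_{0,x}}$, so that $\dim P_0 - \dim P_1 = \alpha$.

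For irreducibility, I would study the cokernel morphism
\begin{equation*}
\pi : \Hom_A(P_1, P_0)^{\mathrm{inj}} \longrightarrow \module(A, \alpha), \qquad f \longmapsto \coker(f),
\end{equation*}
whose domain is the open subset of injective $A$-module maps inside the vector space $\Hom_A(P_1, P_0)$. On one hand, for each injective $f$ the short exact sequence $0 \to P_1 \to P_0 \to \coker(f) \to 0$ is a projective resolution, so $\pi(f) \in \p_A(\alpha)$. On the other hand, given any $V \in \p_A(\alpha)$, the hypothesis $t_0 \geq \alpha$ produces a surjection $\varphi : P_0 \twoheadrightarrow V$, whose kernel $K := \ker\varphi$ is projective because $\pdim_A V \leq 1$; from $\dim K = \dim P_0 - \alpha = C_A s_0 = \dim P_1$ and the fact that projective $A$-modules are determined up to isomorphism by their dimension vectors, we obtain $K \cong P_1$, and any choice of such isomorphism realizes $V$ as $\coker(f)$ for some injective $f$. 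Hence $\pi$ is surjective onto $\p_A(\alpha)$, and irreducibility of the image follows from that of the domain.

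For openness, I would introduce the incidence variety
\begin{equation*}
\tilde Y := \bigl\{(V, \varphi) \in \module(A, \alpha) \times \Hom_A(P_0, V) \,\bigm|\, \varphi \text{ surjective}\bigr\},
\end{equation*}
which (since $\dim_K \Hom_A(P_0, V) = \sum_{x \in Q_0} t_{0,x}\alpha(x)$ is constant in $V$) is an open subvariety of a trivial vector bundle over $\module(A, \alpha)$; the first projection $p : \tilde Y \to \module(A, \alpha)$ is then a smooth surjection and in particular an open map. The assignment $(V, \varphi) \mapsto \ker\varphi$ packages the kernels as a subbundle of constant dimension vector $s_0$ inside $\tilde Y \times P_0$, which, after a Zariski-local trivialization, defines a morphism $\mu : \tilde Y \to \module(A, s_0)$ well-defined up to the $\GL(s_0)$-action. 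By Voigt's lemma, the rigidity $\Ext^1_A(P_1, P_1) = 0$ of the projective module $P_1$ implies that its $\GL(s_0)$-orbit is open in $\module(A, s_0)$. The preimage $\tilde Y^{\mathrm{good}} := \mu^{-1}(\GL(s_0) \cdot P_1) \subseteq \tilde Y$ is therefore open, and $(V,\varphi) \in \tilde Y^{\mathrm{good}}$ iff $\ker\varphi$ is projective iff $V \in \p_A(\alpha)$; applying the open map $p$, the image $p(\tilde Y^{\mathrm{good}}) = \p_A(\alpha)$ is open in $\module(A, \alpha)$.

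The main obstacle is giving a precise meaning to the morphism $\mu$, since the kernel subbundle over $\tilde Y$ only yields a morphism to $\module(A, s_0)$ after a local trivialization that is well-defined up to $\GL(s_0)$-action; passing to the stack (or GIT) quotient clarifies this, but one can equally cover $\tilde Y$ by Zariski opens on which $\mu$ is a bona fide morphism and glue the resulting local openness assertions via $\GL(s_0)$-invariance of the orbit of $P_1$. Once this is accepted, combining irreducibility and openness gives the conclusion.
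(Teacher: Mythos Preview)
The paper does not give a proof of this proposition at all; it merely cites \cite{GeiSch}. Your argument is a correct and essentially standard proof of the result, of the same flavor as what one finds in that reference and in Bongartz's work on the geometry of module varieties: parametrize $\p_A(\alpha)$ by presentations $0\to P_1\to P_0\to V\to 0$ to get irreducibility, and deduce openness from the openness of the orbit of the projective module $P_1$ in its module variety via an incidence correspondence. The acknowledged technicality about $\mu$ being defined only up to the $\GL$-action is genuine but harmless, exactly for the reason you give.

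One small slip to fix: the kernel $\ker\varphi$ has dimension vector $\dim P_0-\alpha=C_A t_0-\alpha=C_A s_0=\dim P_1$, not $s_0$. So your morphism $\mu$ should take values in $\module(A,\dim P_1)=\module(A,C_A s_0)$ rather than $\module(A,s_0)$. The rest of the argument goes through verbatim after this correction, since it is in $\module(A,\dim P_1)$ that the $\GL(\dim P_1)$-orbit of $P_1$ is open.
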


It is immediate to see that if $\p_A(\alpha) \neq \emptyset$ then the generic decomposition of $\C(\alpha)$ is of the form
$$
\C(\alpha)=\overline{\C(\alpha_1)\oplus \ldots \oplus \C(\alpha_l)}
$$
with $\C(\alpha_1), \ldots, \C(\alpha_l)$ indecomposable irreducible components such that $\ext^1_A(\C(\alpha_i), \C(\alpha_j))=0$ for all $1 \leq i \neq j \leq l$.

\subsection{Semi-invariants and orbit semigroups} \label{semi-inv-FFT-orbit-semigr-sec}
Let $\beta \in \NN^{Q_0}$ be a dimension vector of $Q$ and consider the action of the base change group $\GL(\beta)=\prod_{x \in Q_0}\GL(\beta(x), K)$ on the representation space $\rep(Q,\beta)=\prod_{a \in Q_1} K^{\beta(ha)\times \beta(ta)}$. 

\begin{definition} Let $\sigma \in \ZZ^{Q_0}$ be an integral weight of $Q$ such that $\sigma \cdot \beta =0$. 

\begin{enumerate}
\item The \emph{character} of $\GL(\beta)$ \emph{induced by $\sigma$} is 

\begin{align*}
\chi_{\sigma}:\GL(\beta) & \to K \setminus \{0\} \\
g=(g(x))_{x \in Q_0} & \mapsto \chi_{\sigma}(g)=\prod_{x \in Q_0} \det(g(x))^{\sigma(x)}
\end{align*}

\item A polynomial function $F \in K[\rep(Q,\beta)]$ is called a \emph{semi-invariant of weight $\sigma$} on $\rep(Q, \beta)$ if
$$
g \cdot F=\chi_{\sigma}(g)F, \forall g \in \GL(\beta).
$$
The space of all semi-invariants on $\rep(Q,\beta)$ of weight $\sigma$ is denoted by $\SI(Q,\beta)_{\sigma}$. \\

\item More generally, let us assume that $(Q, \R)$ is a bound quiver. A regular function $f \in K[\module(A, \beta)]$ is called a \emph{semi-invariant of weight} $\sigma$ on $\module(A, \beta)$ 
if $g \cdot f=\chi_{\sigma}(g) f$ for all $g \in \GL(\beta)$. We denote by $\SI(A, \beta)_{\sigma}$ the space of semi-invariants of weight $\sigma$ on $\module(A, \beta)$.
\end{enumerate}
\end{definition}

\begin{remark} \label{semi-inv-relns-rmk} Since $\GL(\beta)$ is \emph{linearly} reductive in characteristic zero and $\module(A, \beta)$ is a $\GL(\beta)$-invariant closed subvariety of $\rep(Q, \beta)$, we have that
\[
\SI(A, \beta)_{\sigma}=\{\restr{F}{\module(A, \beta)} \mid F \in \SI(Q, \beta)_{\sigma}\}.
\]
\end{remark}

The following remarkable result has been proved by Derksen-Weyman \cite{DW1}, Domokos-Zubkov \cite{DZ}, and Schofield-van den Bergh \cite{SVB}.

\begin{theorem}[\textbf{FFT for quiver semi-invariants}] (see for example \cite[Corollary 3]{DW1}) \label{FFT-det-thm} Keep the same notation as above. Then the coefficients of the polynomial
$$
\det\left( \sum_{(i,j,p,q,r)} t^{i,j,p}_{q,r} \V^{i,j,p}_{q,r} \right) \in K[\rep(Q,\beta)][t^{i,j,p}_{q,r}: (i,j,p,q,r) \in \I_{\sigma}]
$$
span the weight space of semi-invariants $\SI(Q,\beta)_{\sigma}$. Here, $t^{i,j,p}_{q,r}$, $(i,j,p,q,r) \in \I_{\sigma}$, are indeterminate variables.
\end{theorem}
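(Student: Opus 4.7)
My plan is to realize the coefficients of the generic determinant as a universal Schofield semi-invariant attached to a matching pair of projective representations, and then invoke the spanning theorem of Derksen-Weyman (equivalently, Domokos-Zubkov, or Schofield-van den Bergh) to pass from the Schofield semi-invariants to all of $\SI(Q,\beta)_\sigma$. Concretely, I set
\[
P_0 := \bigoplus_{i=1}^n P_{v_i}^{\oplus \sigma_+(v_i)} \qquad\text{and}\qquad P_1 := \bigoplus_{j=1}^m P_{w_j}^{\oplus \sigma_-(w_j)}.
\]
A morphism $f : P_1 \to P_0$ decomposes into blocks in $\Hom_{KQ}(P_{w_j}^{\oplus \sigma_-(w_j)}, P_{v_i}^{\oplus \sigma_+(v_i)})$, and each block further decomposes via the canonical isomorphism $\Hom_{KQ}(P_{w_j}, P_{v_i}) \cong e_{w_j}(KQ)e_{v_i}$, which has as $K$-basis the paths in $\Ar_{i,j}$. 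The most general such $f$ is therefore parametrized by scalars $t^{i,j,p}_{q,r}$, one for each $(i,j,p,q,r)\in\I_\sigma$.

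Next, for any $V\in\rep(Q,\beta)$, applying $\Hom_{KQ}(-,V)$ to $f$ produces a $\CC$-linear map
\[
f^* : \Hom_{KQ}(P_0,V) \longrightarrow \Hom_{KQ}(P_1,V)
\]
between two vector spaces of common dimension $N$, where the equality $\sum_i \sigma_+(v_i)\beta(v_i) = \sum_j \sigma_-(w_j)\beta(w_j)$ is exactly the hypothesis $\sigma\cdot\beta = 0$. Under the canonical identifications $\Hom_{KQ}(P_x,V)\cong V(x)$ and the observation that a path $p \in \Ar_{i,j}$ acts via the composite linear map $V(p): V(v_i)\to V(w_j)$, a block-by-block comparison shows that the matrix of $f^*$ in the obvious bases is precisely
\[
\sum_{(i,j,p,q,r)\in\I_\sigma} t^{i,j,p}_{q,r}\,\V^{i,j,p}_{q,r}.
\]
A direct equivariance computation on the $\GL(\beta)$-actions on source and target of $f^*$ shows that for each fixed choice of $t$'s, the function $V\mapsto \det f^*$ is a semi-invariant of weight $\sigma$. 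Expanding this generic determinant as a polynomial in the auxiliary variables, $\det\bigl(\sum t^{i,j,p}_{q,r}\,\V^{i,j,p}_{q,r}\bigr) = \sum_\mu F_\mu(V)\, t^\mu$, each coefficient $F_\mu$ therefore lies in $\SI(Q,\beta)_\sigma$.

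It remains to see that the $F_\mu$ already span $\SI(Q,\beta)_\sigma$. Every Schofield semi-invariant $c^Y$ attached to the fixed projectives $(P_1,P_0)$ is obtained from the generic determinant by specializing the $t$'s to the scalars encoding a particular $f$ with $\coker f = Y$, so $c^Y \in \Span_K\{F_\mu\}$. By the Derksen-Weyman spanning theorem \cite{DW1} (see also \cite{DZ,SVB}), the family of Schofield semi-invariants $c^Y$, as $Y$ ranges over representations admitting a projective presentation $P_1\to P_0\to Y\to 0$ of this fixed shape, already spans the full weight space $\SI(Q,\beta)_\sigma$. Combining these two statements yields $\SI(Q,\beta)_\sigma = \Span_K\{F_\mu\}$, as required.

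The main obstacle is the appeal to the Derksen-Weyman spanning theorem itself: its proof relies on Schofield's inductive construction of semi-invariants together with a saturation/semi-continuity argument, and is the substantive content of the FFT, so I would quote it rather than reprove it. The remaining work — identifying each block of $f$ with paths in $\Ar_{i,j}$ and with row/column indices $(q,r)\in\jj\times\ii$, and verifying that the weight really is $\sigma$ in the chosen sign convention — is routine but requires careful bookkeeping to place each scalar $t^{i,j,p}_{q,r}$ in exactly the block prescribed by $\V^{i,j,p}_{q,r}$.
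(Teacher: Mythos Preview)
The paper does not supply its own proof of this theorem; it is quoted from the literature with the citation ``see for example \cite[Corollary 3]{DW1}.'' Your proposal is a correct derivation of this determinantal formulation from the standard Derksen--Weyman spanning theorem, and in fact the paper later (in the proof of Proposition~\ref{blow-ups-semi-inv-prop}) recalls precisely the same reformulation $\SI(Q,\beta)_\theta = \langle c^f \mid f \in \Hom_Q(P_-,P_+)\rangle$ with the same projectives $P_\pm$ that you define as $P_0,P_1$. So your approach is exactly the intended one.

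One cosmetic remark: you index the spanning Schofield semi-invariants as $c^Y$ with $Y=\coker f$, but it is cleaner---and what the paper does---to write $c^f$ indexed directly by $f\in\Hom_Q(P_1,P_0)$, since an arbitrary such $f$ need not be injective and the semi-invariant depends only on $f$. This does not affect the argument.
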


\begin{definition} Let $\V \in \rep(Q,\beta)$ be a $\beta$-dimensional representation. The \emph{orbit semigroup} of $\V$ is defined by
$$
\s_Q(\V):=\{\sigma \in \ZZ^{Q_0} \mid \exists f \in \SI(Q,\beta)_{\sigma} \text{~such that~} f(\V) \neq 0  \}.
$$
(When no confusion arises, we drop the subscript $Q$.)
\end{definition}

\begin{remark} \label{semi-inv-relns-rmk-2} Let $(Q, \R)$ be a bound quiver with bound quiver algebra $A=KQ/ \langle \R \rangle$ and $\beta \in \NN^{Q_0}$ a dimension vector. For an $A$-module $\V \in \module(A, \beta)$, we define
$$
\s_A(\V):=\{\sigma \in \ZZ^{Q_0} \mid \exists f \in \SI(A,\beta)_{\sigma} \text{~such that~} f(\V)\neq 0 \}.
$$
Using Remark \ref{semi-inv-relns-rmk}, we have that
$$
\s_A(\V)=\s_Q(\V).
$$

One of the advantages of working with the algebra $A$ and the affine variety $\module(A, \beta)$ instead of just the hereditary path algebra $KQ$ and the affine space $\rep(Q, \beta)$ is that there are very many cases where $Q$ is wild while $A$ is tame.
\end{remark}

As a direct consequence of Theorem \ref{FFT-det-thm}, we obtain the following description of the orbit semigroup of a quiver representation.

\begin{corollary}\label{membership-Edmonds-coro} Let $\V \in \rep(Q, \beta)$ be a $\beta$-dimensional representation of $Q$. Then
\begin{alignat*}{2}
\s_Q(\V) & :=\Biggl\{ \sigma \in \ZZ^{Q_0} &&\;\Bigg|\; \pctext{3.25in}{$\sigma \cdot \beta=0$ and $\Span_{\CC}(W^{i,j,p}_{q,r} \mid (i,j,p,q,r) \in \I_{\sigma})$ contains an $N \times N$ non-singular matrix}   \Biggr\}. \\
\end{alignat*}
In other words, the membership problem for $\s_Q(\V)$ and Edmonds' problem for $(\V, \sigma)$ are equivalent.
\end{corollary}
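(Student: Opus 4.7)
\bigskip

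\noindent\textbf{Proof proposal for Corollary \ref{membership-Edmonds-coro}.}

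The plan is to deduce the description of $\s_Q(\V)$ directly from the FFT for quiver semi-invariants (Theorem \ref{FFT-det-thm}) by translating the non-vanishing of some semi-invariant at $\V$ into the non-vanishing of a determinantal polynomial in auxiliary variables, and then into the existence of a non-singular matrix in the relevant span.

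First, I would dispose of the condition $\sigma \cdot \beta = 0$. If there exists $f \in \SI(Q,\beta)_{\sigma}$ with $f(\V) \neq 0$, then in particular $\SI(Q,\beta)_{\sigma} \neq 0$; applying the character $\chi_{\sigma}$ to a scalar matrix $t \cdot \Id \in \GL(\beta)$ one sees $t^{\sigma \cdot \beta} f = f$, forcing $\sigma \cdot \beta = 0$. Conversely, the description on the right-hand side already includes $\sigma \cdot \beta = 0$ as a hypothesis, so from now on we may assume this relation holds and the block matrices $\V^{i,j,p}_{q,r}$ are all square of size $N \times N$.

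Next, view the polynomial
\[
P(\V; t) := \det\!\left( \sum_{(i,j,p,q,r) \in \I_{\sigma}} t^{i,j,p}_{q,r} \, \V^{i,j,p}_{q,r} \right)
\]
as an element of $K[\rep(Q,\beta)][t^{i,j,p}_{q,r}]$, and write $P(\V;t) = \sum_{\mu} F_{\mu}(\V)\, t^{\mu}$ where the $F_{\mu}$ are its coefficients in the $t$-variables. By Theorem \ref{FFT-det-thm}, the $F_{\mu}$ span $\SI(Q,\beta)_{\sigma}$. Consequently, the following conditions are equivalent: (a) there exists $f \in \SI(Q,\beta)_{\sigma}$ with $f(\V) \neq 0$; (b) $F_{\mu}(\V) \neq 0$ for some multi-index $\mu$; (c) the polynomial $P(\V; t) \in K[t^{i,j,p}_{q,r}]$ obtained by evaluating $\V$ at our fixed representation is not identically zero in the $t$-variables.

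Finally, I would observe that $P(\V;t)$ is precisely the determinant of a $t$-linear combination of the specific $N \times N$ matrices $\V^{i,j,p}_{q,r}$. Hence $P(\V;t) \not\equiv 0$ as a polynomial in the $t^{i,j,p}_{q,r}$ if and only if there exists a specialization of the $t$-variables in $\CC$ for which the matrix $\sum_{(i,j,p,q,r)} t^{i,j,p}_{q,r} \V^{i,j,p}_{q,r}$ is non-singular; and this in turn is equivalent to the statement that $\Span_{\CC}(\V^{i,j,p}_{q,r} \mid (i,j,p,q,r) \in \I_{\sigma})$ contains a non-singular $N \times N$ matrix. Combining this with the equivalence (a)$\Leftrightarrow$(c) above yields the claimed description of $\s_Q(\V)$, and the equivalence of the membership problem for $\s_Q(\V)$ with Edmonds' problem for $(\V,\sigma)$ is then immediate from the definitions. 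There is no real obstacle here beyond keeping track of the bookkeeping between ``polynomial in $t$ with coefficients in $K[\rep(Q,\beta)]$'' and ``polynomial in $t$ with values in $K$ after substituting $\V$''; the genuine content is supplied by the FFT of Derksen--Weyman, Domokos--Zubkov, and Schofield--van den Bergh.
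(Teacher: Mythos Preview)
Your argument is correct and is exactly the unpacking of the paper's one-line justification: the paper states the corollary as ``a direct consequence of Theorem \ref{FFT-det-thm}'' without further proof, and your chain of equivalences (nonvanishing semi-invariant $\Leftrightarrow$ some coefficient $F_\mu(\V)\neq 0$ $\Leftrightarrow$ $P(\V;t)\not\equiv 0$ $\Leftrightarrow$ some specialization gives a nonsingular matrix) is precisely how that consequence is read off. There is nothing to add.
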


We will also need the following homological description of $\s_Q(\V)$. It is a consequence of the FFT for semi-invariants of bound quivers. We include a proof for the convenience of the reader.

\begin{prop} \label{orbit-semigrp-homological-prop} Let $(Q, \R)$ be a connected, acyclic bound quiver with bound quiver algebra $A=KQ/\langle \R \rangle$. Let $\beta \in \NN^{Q_0}$ be a dimension vector and $\V \in \module(A, \beta)$ an $A$-module. Let $B:=KQ/\Ann_Q(\V)$ and denote by $\langle \cdot, \cdot \rangle_B$ the Euler bilinear form of $B$. Then
$$
\s_Q(\V)=\left \{ \sigma \in \ZZ^{Q_0} \;\middle|\; 
\begin{array}{l}
\restr{\sigma}{\supp(\beta)}=\langle \alpha, - \rangle_{B} \text{~where~}  \alpha \in \NN^{\supp(\beta)} \text{~with~} \p_{B}(\alpha) \neq \emptyset   \\
\text{and such that there exists a~} B\text{-module~} V \in \p_{B}(\alpha) \text{~with} \\
\Hom_{B}(V, \V)=\Ext_{B}^1(V, \V)=0
\end{array}
\right \}
$$
\end{prop}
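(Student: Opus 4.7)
Plan: I would split the proof into the two inclusions and rely throughout on the Schofield--Derksen--Weyman determinantal semi-invariants. As a preliminary reduction, since $\beta(x)=0$ for $x\notin\supp(\beta)$ makes $\GL(\beta(x))$ trivial, the values $\sigma(x)$ at such vertices neither affect the character $\chi_\sigma$ nor the pairing $\sigma\cdot\beta$. Combined with Remark~\ref{semi-inv-relns-rmk-2}, this reduces the problem to checking the claimed homological description for the restricted weight $\bar\sigma:=\restr{\sigma}{\supp(\beta)}$ inside $\s_B(\V)\subseteq\ZZ^{\supp(\beta)}$, working on $\module(B,\bar\beta)$ with $\bar\beta:=\restr{\beta}{\supp(\beta)}$.

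For the inclusion $\supseteq$: given $\alpha\in\NN^{\supp(\beta)}$ with $\p_B(\alpha)\neq\emptyset$ and $V\in\p_B(\alpha)$ satisfying $\Hom_B(V,\V)=\Ext^1_B(V,\V)=0$, I would fix a minimal projective resolution $0\to P_1\xrightarrow{\delta}P_0\to V\to 0$ in $\module(B)$ and apply $\Hom_B(-,-)$. The induced map $\delta(-): \Hom_B(P_0,-)\to\Hom_B(P_1,-)$ has source and target of equal dimension on $\module(B,\bar\beta)$, thanks to $\langle\alpha,\bar\beta\rangle_B=\bar\sigma\cdot\bar\beta=0$, so the regular function $c^V:=\det\delta(-)$ is a semi-invariant on $\module(B,\bar\beta)$ of weight $\langle\alpha,-\rangle_B=\bar\sigma$. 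The assumed vanishing of $\Hom_B(V,\V)=\ker\delta(\V)$ and $\Ext^1_B(V,\V)=\coker\delta(\V)$ forces $\delta(\V)$ to be an isomorphism, so $c^V(\V)\neq 0$, and therefore $\sigma\in\s_Q(\V)$.

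For the inclusion $\subseteq$: given $\sigma\in\s_Q(\V)$, restriction to $\module(B,\bar\beta)$ places $\bar\sigma$ in $\s_B(\V)$, and I would invoke the bound-quiver analog of Theorem~\ref{FFT-det-thm}, namely that $\SI(B,\bar\beta)_{\bar\sigma}$ is spanned by the determinantal semi-invariants $c^V$ for $V\in\p_B(\alpha)$ with $\bar\sigma=\langle\alpha,-\rangle_B$. Then some such $c^V$ is nonzero at $\V$; the injectivity of the corresponding $\delta(\V)$ yields $\Hom_B(V,\V)=0$, and the dimension equality $\langle\alpha,\bar\beta\rangle_B=0$ upgrades this to the full vanishing $\Ext^1_B(V,\V)=0$.

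The main obstacle is the FFT step in the harder direction. Theorem~\ref{FFT-det-thm} produces spanning semi-invariants indexed by $KQ$-modules rather than by $B$-modules of projective dimension at most one, so one must transfer the spanning statement across the surjection $KQ\twoheadrightarrow B$. The natural approach, given a $KQ$-module $U$ with $c^U$ nonzero at $\V$, is to descend the standard $KQ$-projective presentation of $U$ to $B$ and argue that its cokernel can be taken to lie in $\p_B(\alpha)$ for some $\alpha$ satisfying $\bar\sigma=\langle\alpha,-\rangle_B$. The technical work lies in guaranteeing that the descended ``kernel term'' is projective over $B$, equivalently that the relevant $B$-module has $\pdim_B\leq 1$; this is the heart of the bound-quiver FFT needed here.
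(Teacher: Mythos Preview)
Your overall architecture matches the paper's proof: the same reduction to $B=KQ/\Ann_Q(\V)$ (so that $\V$ becomes faithful), the same use of Schofield determinantal semi-invariants for the easy inclusion, and the same recognition that the hard inclusion hinges on forcing the test module to have projective dimension at most one over $B$. You are also right that this last point is the entire content of the proposition.

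Where your proposal stops short is precisely where the paper supplies the missing idea, and it is worth naming it because your suggested route (``descend the $KQ$-projective presentation and hope the kernel stays projective'') does not by itself explain \emph{why} that should succeed. The paper's argument is: after the reduction one may assume $\V$ is a faithful $A$-module (with $A=B$); the bound-quiver FFT of Derksen--Weyman/Domokos already hands you an $A$-module $V$ with $c^V(\V)\neq 0$. Suppose $\pdim_A V\geq 2$ and take a minimal projective resolution $\cdots\to P_2\xrightarrow{g}P_1\xrightarrow{f}P_0$. Applying $\Hom_A(-,\V)$, the nonvanishing $c^V(\V)=\det(\overline f)$ makes $\overline f$ an isomorphism, so $\overline g=0$. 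But the entries of $g$, viewed as a matrix of (residue classes of) path combinations, are sent by $\overline g$ to the corresponding linear maps $\V(p_{x,y})$; since $\V$ is faithful, $\overline g=0$ forces every entry of $g$ to lie in the ideal of relations, i.e.\ $g=0$ as a map of $A$-modules. Then $f$ is injective, contradicting $\pdim_A V\geq 2$. Thus $\pdim_A V\leq 1$, which simultaneously gives $\sigma=\langle\alpha,-\rangle_A$ and the desired $\Hom/\Ext^1$ vanishing. The point you were missing is that faithfulness of $\V$---built into the very definition of $B$---is exactly what converts ``$\overline g=0$ on $\V$'' into ``$g=0$''.
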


\begin{proof} Since $Q$ is acyclic, the Gabriel quiver of the algebra $B$ is a subquiver of $Q$ whose set of vertices is $\supp(\beta)=\{x \in Q_0 \mid \beta(x)\neq 0\}$. Furthermore, using Remark \ref{semi-inv-relns-rmk-2}, we get that
$$
\s_Q(\V)=\{\sigma \in \ZZ^{Q_0} \mid \restr{\sigma}{\supp(\beta)} \in \s_B(\V)\}.
$$

Thus, to prove our claim, we can simply assume without any loss of generality that $\V$ is a \emph{faithful} $A$-module. Then $A=B$, and the claim of the proposition can be rephrased as asking to show that
\begin{equation} \label{eqn-1}
\s_Q(\V)=\left \{ \sigma \in \ZZ^{Q_0} \;\middle|\; 
\begin{array}{l}
\sigma=\langle \alpha, - \rangle_{A} \text{~where~}  \alpha \in \NN^{Q_0} \text{~with~} \p_{A}(\alpha) \neq \emptyset   \\
\text{and such that there exists an~} A\text{-module~} V \in \p_{A}(\alpha) \text{~with} \\
\Hom_{A}(V, \V)=\Ext_{A}^1(V, \V)=0
\end{array}
\right \}
\end{equation}

Let $\sigma \in \ZZ^{Q_0}$ be a weight of the form $\sigma=\langle \alpha, - \rangle_{A}$ where $\alpha \in \NN^{Q_0}$ with $\p_{A}(\alpha) \neq \emptyset$ and such that there exists an $A$-module $V \in \p_{A}(\alpha)$ with
$\Hom_{A}(V, \V)=\Ext_{A}^1(V, \V)=0$. Then the Schofield's semi-invariant $c^{V} \in \SI(A, \beta)_{\sigma}$ has the property that $c^V(\V)\neq 0$ (see \cite{Domo}), and thus $\sigma \in \s_Q(\V)$. This proves the inclusion $``\supseteq"$ of $(\ref{eqn-1})$.

To prove the other inclusion, we know from the FFT for quiver semi-invariants (see \cite{DW4} or \cite{Domo}) that there exists a Schofield semi-invariant $c^V \in \SI(A, \beta)_{\sigma}$ with $V \in \module(A, \alpha)$ for some dimension vector $\alpha \in \NN^{Q_0}$ such that
\begin{enumerate}[(i)]
\item $\sigma(x)=\dim_K \Hom_A(V, S_x)-\dim_K \Hom(S_x, \tau_A V)$ for every $x \in Q_0$;

\item $c^V(\V) \neq 0 \Longleftrightarrow \Hom_A(V, \V)=\Hom_A(\V, \tau_A V)=0$.
\end{enumerate}

To complete the proof, we will show that $\pdim_A V \leq 1$. For this last step, we will need the assumption that $\V$ is a faithful $A$-module. In what follows, we are going to use the same arguments as in the proof of \cite[Theorem 1]{DW4}. Assume for a contradiction that $\pdim_A V \geq 2$ and let  
$$
\cdots \to P_2 {\overset{g} \longrightarrow} P_1 {\overset{f} \longrightarrow} P_0 
$$
be a minimal projective resolution of $V$. Let us consider the induced complex
$$
\Hom_A(P_0, \V) {\overset{\overline{f}} \longrightarrow} \Hom_A(P_1, \V) {\overset{\overline{g}} \longrightarrow} \Hom_A(P_2, \V).
$$
Recall that by definition $c^V(\V)=\det (\overline{f})$ and so $\overline{f}$ is an isomorphism of vector spaces by (ii). But his implies that $\Hom_A(P_1, \V)=\Ima(\overline{f}) \subseteq \ker(\overline{g})$, i.e. $\overline{g}$ is the zero map.

On the other hand, since $g$ is a homorphism between projective modules, we can view $g$ as a matrix whose entries are linear combinations of residue classes of parallel paths in $Q$. Now, let $p_{x,y}$ be a linear combination of oriented paths from $x \in Q_0$ to $y \in Q_0$ such that its residue class modulo $\langle \R \rangle$ is the $(x,y)$-entry of $g$. Then the $(x,y)$-entry of $\overline{g}$ is precisely the linear map $\V(p_{x,y})$. Since $\overline{g}$ is zero and $\V$ is a faithful $A$-module, we get that all the entries $p_{x,y}$ belong to $\langle \R \rangle$. This shows that $g$ is the zero homorphism, making $f$ injective (contradiction). So, we conclude that $\pdim V \leq 1$. This now implies that
$$
\sigma(x)=\langle \alpha, \ddim S_x \rangle_A, \forall x \in Q_0, \text{~i.e.~} \sigma=\langle \alpha, \cdot \rangle_A, 
$$
which finishes the proof.
\end{proof}

\begin{definition} \label{weight-semigrp-defn} Let $(Q, \R)$ be a connected, acyclic bound quiver with bound quiver algebra $A=KQ/\langle \R \rangle$. We define the the \emph{weight semigroup} of a module $\V \in \module(A,\beta)$ to be the semigroup
$$
\LL_{\V}=\left \{ \sigma \in \ZZ^{Q_0} \;\middle|\;  \restr{\sigma}{\supp(\beta)}=\langle \alpha, - \rangle_{B} \text{~where~}  \alpha \in \NN^{\supp(\beta)} \text{~with~} \p_{B}(\alpha) \neq \emptyset    \right \},
$$
where $B=A/\Ann_A(W) \simeq KQ/\Ann_{KQ}(W)$.
\end{definition}

\begin{rmk} According to Proposition \ref{orbit-semigrp-homological-prop}, a necessary condition for a weight $\sigma \in \ZZ^{Q_0}$ to belong to $\s_Q(\V)$ is that $\sigma$ belongs to $\LL_{\V}$, i.e.
$$
\s_Q(\V) \subseteq \LL_{\V} \subseteq \ZZ^{Q_0}.
$$
\end{rmk}

\section{Saturated weights and the capacity of quiver data}

\subsection{Quiver semi-stability and capacity of quiver data} Let $Q=(Q_0,Q_1, t,h)$ be a connected acyclic quiver, $\beta \in \ZZ_{>0}^{Q_0}$ a dimension vector, and $\sigma \in \ZZ^{Q}$ a non-zero weight of $Q$ such that $\sigma \cdot \beta=0$. Let $v_1, \ldots, v_n$ be the vertices of $Q$ where $\sigma$ takes positive values, and let $w_1, \ldots, w_m$ be the vertices of $Q$ where $\sigma$ takes negative values. Recall that $N=\sum_{i=1}^n \sigma_{+}(v_i)\beta(v_i)=\sum_{j=1}^m \sigma_{-}(w_j)\beta(w_j)$. 

Let $Q^{\sigma}$ be the bipartite quiver with partite sets $\{v_1, \ldots, v_n\}$ and $\{w_1, \ldots, w_m\}$, respectively. Furthermore, for every oriented path $p$ in $Q$ from $v_i$ to $w_j$, we draw an arrow $a_p$ from $v_i$ to $w_j$ in $Q^{\sigma}$. The restriction of $\beta$ (or $\sigma$) to $Q^{\sigma}$ is denoted by the same symbol. Note that the quiver $Q^{\sigma}$ does not change if $\sigma$ is replaced by any integer multiple $l\sigma$ with $l>0$.

We have the following important consequence of the FFT for quivers semi-invariants.

\begin{prop} \label{blow-ups-semi-inv-prop} Keep the same notation as above. Let $\varphi: \rep(Q, \beta) \to \rep(Q^{\sigma}, \beta)$ be the morphism defined by $\varphi(\V)(a_p)=\V(p)$ for all $\V \in \rep(Q,\beta)$ and arrows $a_p \in Q^{\sigma}_1$. Then $\varphi$ induces a surjective linear map $\varphi^{\#}:\SI(Q^{\sigma}, \beta)_{l \sigma} \onto \SI(Q,\beta)_{l \sigma}$ for all integers $l>0$.
\end{prop}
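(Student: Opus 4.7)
The plan is to invoke the FFT for quiver semi-invariants (Theorem \ref{FFT-det-thm}) for both $Q$ and $Q^{\sigma}$, and then compare the two resulting spanning sets under pullback along $\varphi$. First, applied to $(Q,\beta,l\sigma)$, the FFT tells us that $\SI(Q,\beta)_{l\sigma}$ is spanned by the coefficients (in the indeterminates $t^{i,j,p}_{q,r}$) of $\det\!\bigl(\sum_{(i,j,p,q,r)\in \I_{l\sigma}} t^{i,j,p}_{q,r}\, \V^{i,j,p}_{q,r}\bigr)$, where $p$ ranges over oriented paths in $Q$ from $v_i$ to $w_j$. Applied to $(Q^{\sigma},\beta,l\sigma)$ the same theorem produces an analogous spanning set, but now $p$ ranges over oriented paths in $Q^{\sigma}$ from $v_i$ to $w_j$. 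Because $Q^{\sigma}$ is bipartite with all arrows going from positive $\sigma$-vertices to negative $\sigma$-vertices, these $Q^{\sigma}$-paths are exactly the arrows $a_p$, which by the very construction of $Q^{\sigma}$ are in bijection with oriented paths $p$ of $Q$ from $v_i$ to $w_j$. Thus the two index sets $\I_{l\sigma}$ are canonically identified.

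Second, I would verify that $\varphi^{\#}$ really takes $\SI(Q^{\sigma},\beta)_{l\sigma}$ into $\SI(Q,\beta)_{l\sigma}$. For any $g \in \GL(\beta)$ and any path $p=a_k\cdots a_1$ in $Q$ from $v_i$ to $w_j$, the intermediate factors $g(ta_s)$ telescope, giving $(g\cdot V)(p)=g(w_j)V(p)g(v_i)^{-1}$. Hence $\varphi(g\cdot V)=(g|_{Q^{\sigma}_0})\cdot \varphi(V)$, so $\varphi$ is equivariant with respect to the natural surjection $\GL(\beta)\twoheadrightarrow \prod_{x\in Q^{\sigma}_0}\GL(\beta(x))$. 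Since $l\sigma$ vanishes outside $Q^{\sigma}_0$, the character $\chi_{l\sigma}$ factors through this surjection, which immediately yields that $F\circ\varphi \in \SI(Q,\beta)_{l\sigma}$ for every $F\in \SI(Q^{\sigma},\beta)_{l\sigma}$.

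Third, I would conclude by matching the two FFT spanning sets under $\varphi^{\#}$. By the definition of $\varphi$, we have $\varphi(V)(a_p)=V(p)$, and therefore the block matrix built from $\varphi(V)$ using the arrow $a_p$ of $Q^{\sigma}$ coincides with the block matrix $\V^{i,j,p}_{q,r}$ built from $V$ using the path $p$ of $Q$. It follows that $\varphi^{\#}$ sends the FFT spanning element of $\SI(Q^{\sigma},\beta)_{l\sigma}$ indexed by $(i,j,p,q,r)$ to the FFT spanning element of $\SI(Q,\beta)_{l\sigma}$ indexed by the same tuple. Consequently $\varphi^{\#}$ carries a spanning set onto a spanning set, which establishes surjectivity.

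The argument is almost tautological once the FFT is in hand; the only real content is recognizing that both weight spaces are spanned by determinants built from the same combinatorial data—oriented paths from positive to negative $\sigma$-vertices—and that $\varphi$ is precisely the map that identifies these data. The one point demanding minor care is the equivariance of $\varphi$ at non-partite vertices of $Q$, which works because $l\sigma$ vanishes there; no genuine obstacle arises.
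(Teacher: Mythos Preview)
Your proof is correct and follows essentially the same strategy as the paper: invoke the FFT for both $Q$ and $Q^{\sigma}$, identify the two spanning sets via the bijection between oriented paths $p$ in $Q$ and arrows $a_p$ in $Q^{\sigma}$, and observe that $\varphi^{\#}$ carries one onto the other. The only cosmetic difference is that the paper phrases the FFT in terms of Schofield semi-invariants $c^f$ with $f\in\Hom_Q(P_-,P_+)$ rather than as coefficients of the determinant polynomial in Theorem~\ref{FFT-det-thm}; your explicit verification of equivariance (and that $\chi_{l\sigma}$ factors through the projection to $\prod_{x\in Q^{\sigma}_0}\GL(\beta(x))$) is a nice addition the paper leaves implicit.
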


\begin{proof} Fix an integer $l \geq 1$ and denote $l \sigma$ by $\theta$. Let us now recall the FFT in terms of the so-called Schofield' semi-invariants. Let $f$ be an $m \times n$ block matrix whose $(j,i)$-block entry is a $\theta_{-}(w_j)\times \theta_{+}(v_i)$ matrix whose entries are $K$-linear combinations of oriented paths from $v_i$ to $w_j$ in $Q$. We point out that $f$ can simply be viewed as an element of $\Hom_Q(P_{-}, P_{+})$ where 
$$P_{-}=\bigoplus_{j=1}^m P_{w_j}^{\theta_{-}(w_j)} \text{~and~} P_{+}=\bigoplus_{i=1}^n P_{v_i}^{\theta_{+}(v_i)}.$$

For every $\V \in \rep(Q, \beta)$, we define $\V^f$ to be the $N \times N$ matrix obtained from $f$ by replacing every oriented path $p$ from $v_i$ to $w_j$ by the matrix $\V(p) \in K^{\beta(w_j) \times \beta(v_i)}$. (Note that $\V^f$ is precisely the linear map $\Hom_Q(f, \V): \Hom_Q(P_{+}, \V) \to \Hom_Q(P_{-}, \V)$ viewed as a matrix.) Thus we can define the regular map $c^f:\rep(Q,\beta) \to K$ by $c^f(\V)=\det(\V^f)$ for all $\V \in \rep(Q, \beta)$. It turns out that $c^f$, also known as the Scofield semi-invariant associated to $f$, is a semi-invariant on $\rep(Q, \beta)$ of weight $\theta$. Then the FFT for quiver semi-invariants (see \cite{DZ} or \cite{DW1})  can be stated as follows

\begin{equation} \label{FFT-eqn-1}
\SI(Q, \beta)_{\theta}=\langle c^f \mid f \in \Hom_Q(P_{-}, P_{+}) \rangle
\end{equation}

Now, if we denote by $\widetilde{P}_{-}$ and $\widetilde{P}_{+}$ the analogs of $P_{-}$ and $P_{+}$ for $Q^{\sigma}$, we can think of any $f \in \Hom_Q(P_{-}, P_{+})$, viewed as a matrix as above, as an element $\widetilde{f}$ of $\Hom_{Q^{\sigma}}(\widetilde{P}_{-}, \widetilde{P}_{+})$ by simply replacing an oriented path $p$ in $Q$ from $v_i$ to $w_j$ by the corresponding arrow $a_p$ in $Q^{\sigma}$. Moreover, for any $c^f \in \SI(Q, \beta)_{\theta}$ with $f \in \Hom_Q(P_{-}, P_{+})$, we have that
$$
c^f(\V)=c^{\widetilde{f}}(\varphi(\V)), \forall \V \in \rep(Q, \beta),
$$
i.e. 
$$c^f=\varphi^{\#}(c^{\widetilde{f}}).$$
The surjectivity of $\varphi^{\#}$ now follows from $(\ref{FFT-eqn-1})$.
\end{proof}

Let us now recall the definition of the capacity of a quiver datum $(\V, \sigma)$.

\begin{definition} (see \cite{ChiDer-2019}) \label{defn-cap-quiver-datum} Let $\V \in \rep(Q,\beta)$ be a representation of $Q$. 

\begin{enumerate}
\item The \emph{Brascamp-Lieb operator} associated to the quiver datum $(\V, \sigma)$ is the completely positive operator $T_{\V, \sigma}$ with Kraus operators $\{\V^{i,j,p}_{q,r} \mid (i,j,p,q,r) \in \I_{\sigma}\}$, i.e.
\begin{align*}
T_{\V,\sigma}: \CC^{N \times N} & \to \CC^{N \times N}\\
X& \to T_{\V,\sigma}(X):=\sum_{(i,j,p,q,r)}(\V^{i,j,p}_{q,r})^T\cdot X \cdot \V^{i,j,a}_{q,r}
\end{align*}

\item The \emph{capacity} of $(\V,\sigma)$, denoted by $\capa_Q(\V,\sigma)$,  is defined to be the capacity of $T_{\V, \sigma}$, i.e.
$$
\capa_Q(\V,\sigma):=\inf \{\Det(T_{\V,\sigma}(X)) \mid  X \in \s^{+}_N,  \Det(X)=1 \}.
$$
(Here, for a given positive integer $d$, we denote by $\s^{+}_d$ the set of all $d \times d$ (symmetric) positive definite real matrices.)
\end{enumerate}
\end{definition}

We are now ready to prove Theorem \ref{membership-thm-1}

\begin{proof}[Proof of Theorems \ref{membership-thm-1}] Let $\V^{\sigma}$ be the representation of $Q^{\sigma}$ defined by
\begin{itemize}
\item $\V^{\sigma}(v_i)=\V(v_i)$, $\V^{\sigma}(w_j)=\V(w_j)$ for all $i \in [n]$, $j \in [m]$, and
\smallskip
\item $\V^{\sigma}(a_p)=\V(p)$ for every arrow $a_p$ in $Q^{\sigma}$.
\end{itemize}	
It has been proved in \cite[Theorem 1]{ChiDer-2019} that
$$
\capa_Q(\V, \sigma)>0 \Longleftrightarrow \V^{\sigma} \text{~is~} \sigma-\text{semi-stable}.
$$
(Recall that a representation $M \in \rep(Q, \dd)$ is said to be \emph{$\sigma$-semi-stable} if $n \sigma \in \s_Q(M)$ for some positive integer $n \geq 1$.) Thus, to prove the equivalence $(i) \Longleftrightarrow (ii)$ it comes down to proving that
$$
\V \text{~is~} \sigma-\text{semi-stable} \Longleftrightarrow \V^{\sigma} \text{~is~} \sigma-\text{semi-stable as a representation of~} Q^{\sigma}.
$$ 

It is not difficult to prove the implication from right to left. Let us now check that if $\V$ is $\sigma$-semi-stable then so is $\V^{\sigma}$ as representation of $Q^{\sigma}$. Let $F \in \SI(Q,\beta)_{l \sigma}$ be a semi-invariant of weight $l \sigma$ such that $F(\V)\neq 0$. According to Proposition \ref{blow-ups-semi-inv-prop}, we can write $F=\varphi^{\#}(f)$ for some $f \in \SI(Q^{\sigma}, \beta)_{l \sigma}$, and so $F(\V)=f(\varphi(\V))=f(\V^{\sigma})\neq 0$. This shows that $\V^{\sigma}$ is $\sigma$-semi-stable.

To see why the last part of the theorem holds, recall that
\begin{itemize}
\item $(\V, \sigma)$ is an $ERP$ datum when
$$
\sigma \in \s_Q(\V) \Longleftrightarrow \capa_Q(\V, \sigma)>0
$$

\item $\sigma$ is $\V$-saturated when
$$
\sigma \in \s_Q(\V) \Longleftrightarrow n\sigma \in \s_Q(\V) \text{~for some integer~} n \geq 1.
$$
\end{itemize}
With the equivalence of $(i)$ and $(ii)$ at our disposal, it is now clear that 
$$
(\V, \sigma) \text{~is an ERP tuple} \Longleftrightarrow \sigma \text{~is~} \V-\text{saturated}.
$$

Finally, since membership to orbit semigroups for saturated weights is equivalent to the positivity of the capacity, Algorithm G in \cite[Corollary 3.17]{GarGurOliWig-2018} yields a deterministic polynomial time algorithm to check if $\sigma \in \s_Q(\V)$.
\end{proof}

\begin{remark} Let $Q$ be any Dynkin or Euclidean quiver and $\V \in \rep(Q, \beta)$ any representation. It has been proved in \cite{Chi-orbit-semigr-2009} that any weight $\sigma \in \ZZ^{Q_0}$ is $\V$-saturated. Consequently, there exists a deterministic polynomial time algorithm to check if $\Span_{\CC}({\V}^{i,j,p}_{q,r} \mid (i,j,p,q,r) \in \I_{\sigma})$ contains a non-singular matrix. 
\end{remark}

\section{Proof of Theorem \ref{main-thm-2}} The proof of Theorem \ref{main-thm-2} requires the following results about irreducible components of the form $\C(\alpha) \subseteq \module(A, \alpha)$ where $\alpha \in \NN^{Q_0}$ is a dimension vector with $\p_A(\alpha) \neq \emptyset$. This type of irreducible components also play an important role in \cite{BobZwa-2017}. 

\begin{lemma} \label{lemma-1-ext-0} Let $A=KQ/ \langle \R \rangle$ be an acyclic bound quiver algebra and $\alpha \in \NN^{Q_0}$ a dimension vector such that $\p_A(\alpha) \neq \emptyset$. Assume that $\C(\alpha)$ is an orbit closure and write $\C(\alpha)=\overline{\GL(\alpha)V}$ for some $V \in \p_A(\alpha)$. Then $\Ext^1_A(V,V)=0$, i.e.
$$
\ext^1_A(\C(\alpha), \C(\alpha))=0.
$$
\end{lemma}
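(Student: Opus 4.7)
The plan is to compute $\dim \C(\alpha)$ in two different ways and compare the two expressions. First, since $\C(\alpha)=\overline{\GL(\alpha)V}$,
$$
\dim \C(\alpha)=\dim \GL(\alpha)V=\dim \GL(\alpha)-\dim \End_A(V),
$$
where the second equality uses that the stabilizer of $V$ in $\GL(\alpha)$ is the group of $A$-module automorphisms of $V$, a Zariski-open subset of $\End_A(V)$ of the same dimension.

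Second, I would invoke the Voigt-type identification of the tangent space to $\module(A,\alpha)$ at $V$. For every module $V$ there is a canonical isomorphism
$$
T_V\module(A,\alpha)/T_V(\GL(\alpha)V)\cong \Ext^1_A(V,V),
$$
and consequently
$$
\dim T_V \module(A,\alpha)=\dim \GL(\alpha)-\dim \End_A(V)+\dim \Ext^1_A(V,V).
$$
Since $\pdim_A V\le 1$, the obstruction space $\Ext^2_A(V,V)$ vanishes, which is the standard criterion ensuring that $V$ is a smooth point of $\module(A,\alpha)$. Because $\p_A(\alpha)$ is an irreducible open subset of $\module(A,\alpha)$ by Proposition~\ref{irr-comp-proj-1-prop} and contains $V$, the unique irreducible component of $\module(A,\alpha)$ through $V$ must be $\C(\alpha)$; hence
$$
\dim \C(\alpha)=\dim_V \module(A,\alpha)=\dim T_V \module(A,\alpha)=\dim \GL(\alpha)-\dim \End_A(V)+\dim \Ext^1_A(V,V).
$$

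Comparing the two expressions for $\dim \C(\alpha)$ forces $\dim \Ext^1_A(V,V)=0$. Since $\ext^1_A(\C(\alpha),\C(\alpha))$ is defined as the minimum of $\dim \Ext^1_A(X,Y)$ over $(X,Y)\in \C(\alpha)\times \C(\alpha)$, the vanishing at the single pair $(V,V)$ already yields $\ext^1_A(\C(\alpha),\C(\alpha))=0$. The delicate point in the plan is the smoothness claim for $\module(A,\alpha)$ at modules of projective dimension at most one; I would cite the standard deformation-theoretic fact that obstructions to deforming $V$ as an $A$-module lie in $\Ext^2_A(V,V)$ (as in \cite{GeiSch}) in order to pass from the tangent-space dimension to the local dimension of the module variety.
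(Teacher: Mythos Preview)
Your argument is correct and follows essentially the same route as the paper: both use Voigt's isomorphism together with the vanishing of $\Ext^2_A(V,V)$ (from $\pdim_A V\le 1$) to equate $\dim T_V\module(A,\alpha)$ with $\dim\C(\alpha)$, and then compare with $\dim\GL(\alpha)V$. One small caveat: Voigt's isomorphism is a priori a statement about the module \emph{scheme}, and the paper is explicit that $\Ext^2_A(V,V)=0$ is used not only for smoothness but also to guarantee that the scheme-theoretic and variety-theoretic tangent spaces at $V$ coincide (citing \cite{Gei-1996}), so you should avoid asserting the isomorphism at the variety level ``for every module~$V$''.
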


\begin{proof} Since $\Ext^2_A(V,V)=0$ as $\pdim V \leq 1$, the following two facts hold.
\begin{enumerate}
\item Every infinitesimal deformation of $V \in \module(A, \alpha)$ can be lifted to a formal deformation, and thus the tangent space to $\module(A, \alpha)$ at $V$ coincides with the tangent space to the module scheme $\underline{\underline{\module}}(A, \alpha)$ (see \cite[Section 3.7]{Gei-1996}). Consequently, Voigt's isomorphism (see \cite[Chapter II]{Voi-77} or \cite[Section 2]{BobZwa-2017}) holds at the level of varieties as well, i.e. we have a natural isomorphism

$$
{T_V(\module(A, \alpha))\over T_V(\GL(\alpha)V)} \simeq \Ext^1_A(V,V).
$$

\item $V$ is a smooth point of $\module(A,\alpha)$ (see \cite[Section 3.7]{Gei-1996} or \cite[Sections 2.1 and 2.2]{delaPen-Sko-1996}), and so
$$
\dim T_V(\module(A, \alpha))=\dim \C(\alpha).
$$ 
\end{enumerate}

It now follows from $(1)$ and $(2)$ that $\dim \Ext^1_A(V,V)=0$.
\end{proof}

In what follows, we say that a non-zero $A$-module $M$ is \emph{$\theta$-stable} for an integral weight $\theta \in \ZZ^{Q_0}$ if $\theta \cdot \ddim M=0$ and $\theta \cdot \ddim M'<0$ for all proper submodules $0 \neq M' < M$. It is immediate to see that any $\theta$-stable module is a Schur module. 

\begin{lemma} \label{lemma-2-ext-0} Let $A=KQ/ \langle \R \rangle$ be an acyclic tame algebra, $\alpha \in \NN^{Q_0}$ a dimension vector, and $\theta=\langle \alpha, \cdot \rangle_A \in \ZZ^{Q_0}$. Assume that $\C(\alpha) \subseteq \module(A, \alpha)$ is an indecomposable irreducible component which is not an orbit closure. Then the generic module in $\C(\alpha)$ is $\theta$-stable, $\C(\alpha)$ contains $\Hom$-orthogonal modules, and
$$
\ext_A^1(\C(\alpha), \C(\alpha))=0.
$$
\end{lemma}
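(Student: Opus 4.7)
The plan is to extract from tameness that a generic $V \in \C(\alpha)$ is a brick whose orbit has codimension exactly one in $\C(\alpha)$, and then to derive all three conclusions from this structural information. Since $A$ is tame, the Crawley--Boevey/Drozd parameter bound gives that the number of parameters of $\C(\alpha)$ is at most one; combined with $\C(\alpha)$ being indecomposable but not an orbit closure, this forces the parameter count to equal one exactly, and a generic $V\in\C(\alpha)$ to be a brick with orbit of codimension one in $\C(\alpha)$. Since $\pdim_A V\leq 1$, the Geiss--Schr\"oer smoothness at $V$ together with Voigt's isomorphism (used in the same form as in Lemma~\ref{lemma-1-ext-0}) gives $\dim \Ext_A^1(V,V) = \dim \C(\alpha)-\dim \GL(\alpha)\cdot V = 1$, so
$$
\theta\cdot\alpha = \langle \alpha,\alpha\rangle_A = \dim \End_A(V) - \dim \Ext_A^1(V,V) = 0.
$$

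For the $\Hom$-orthogonality and the vanishing $\ext_A^1(\C(\alpha),\C(\alpha))=0$, I would use that the one-parameter condition realizes generic modules of $\C(\alpha)$ in a rational curve $\{V_t\}$ of pairwise non-isomorphic bricks. By upper-semicontinuity of $\dim \Hom$ together with the brick condition, for generic distinct $s,t$ one obtains $\Hom_A(V_s,V_t)=0$, which produces a $\Hom$-orthogonal pair inside $\C(\alpha)$. The Euler-form identity $\langle \alpha,\alpha\rangle_A = 0$ then forces $\Ext_A^1(V_s,V_t) = 0$, so $\ext_A^1(\C(\alpha),\C(\alpha)) = 0$ by the definition of generic $\ext$.

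For $\theta$-stability of a generic $V$, semistability is the easy half: given any proper $0\neq M'\subsetneq V$, a nonzero morphism $V\to M'$ post-composed with the inclusion would be a non-invertible element of $\End_A(V)=K$, forcing $\Hom_A(V,M')=0$, and hence
$$
\theta\cdot\ddim M' \;=\; \langle \alpha,\ddim M'\rangle_A \;=\; -\dim \Ext_A^1(V,M') \;\leq\; 0.
$$
The main obstacle, where the real work lies, is ruling out equality $\theta\cdot\ddim M'=0$ for a proper nonzero submodule. My plan is to argue that $\Ext_A^1(V,M')=0$ is incompatible with $V$ sitting inside a nontrivial one-parameter family: this vanishing is precisely the unobstructedness condition that would allow $M'$ to extend to a flat subfamily $M'_t\subset V_t$, producing a one-parameter family of short exact sequences $0\to M'_t\to V_t\to V_t/M'_t\to 0$ of $\theta$-semistable modules in $\C(\ddim M')$ and $\C(\alpha-\ddim M')$. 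Applying the generic decomposition (Theorem~\ref{decomp-irr-thm}) to these two components together with the tameness parameter bound of one for each would then feed two independent parameters back into the family of $V_t$, contradicting the single-parameter count of $\C(\alpha)$ and the pairwise non-isomorphism of the $V_t$. Carrying out this deformation-theoretic bookkeeping cleanly is the technically most delicate step.
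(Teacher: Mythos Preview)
Your outline---extract a one-parameter family from tameness, pin down the generic member as a brick with $\dim\Ext^1_A(V,V)=1$, then read off the three conclusions---is the same skeleton as the paper's proof. What is missing is the one structural input that makes every step work: Crawley--Boevey's Homogeneity Theorem for tame algebras, which guarantees that almost every $V_\lambda$ in the one-parameter family satisfies $\tau_A V_\lambda \simeq V_\lambda$. Combined with the Auslander--Reiten formula (legitimate here since $\pdim V_\lambda\le 1$), homogeneity is what the paper uses to establish the brick property, $\theta$-stability, $\Hom$-orthogonality, and the $\Ext^1$-vanishing, in that order. Without it your three deductions each break.

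Concretely: your claim that the parameter count equal to one ``forces a generic $V$ to be a brick'' is not justified. Voigt gives $\dim\Ext^1_A(V,V)=1$, but nothing you wrote bounds $\dim\End_A(V)$; the paper obtains $\dim\End_A(V)=\dim\Ext^1_A(V,V)$ precisely from $\tau_A V\simeq V$ together with AR duality. Your semicontinuity argument for $\Hom$-orthogonality yields only $\dim\Hom_A(V_s,V_t)\le 1$ for generic $s\ne t$; a nonzero map between non-isomorphic bricks of the same dimension vector is neither injective nor surjective, and that is no contradiction. The paper instead first proves $\theta$-stability (a homogeneous Schur module is stable for its canonical weight, and $\pdim V\le 1$ forces that weight to equal $\langle\alpha,\cdot\rangle_A$), then gets $\Hom_A(V_\lambda,V_{\lambda'})=0$ from stability, and finally $\Ext^1_A(V_\lambda,V_{\lambda'})\cong D\Hom_A(V_{\lambda'},\tau_A V_\lambda)\cong D\Hom_A(V_{\lambda'},V_\lambda)=0$ from AR duality and homogeneity again---not from the Euler-form identity. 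Lastly, in your stability argument the vanishing of $\Ext^1_A(V,M')$ is \emph{not} the obstruction to extending $M'\subset V$ to a flat subfamily $M'_t\subset V_t$; the first-order obstruction to lifting a class in $\Ext^1_A(V,V)$ to a deformation of the flag lies in $\Ext^1_A(M',V/M')$, and even granting the extension, the parameter-count contradiction you sketch is not carried out. So the step you flag as most delicate does not go through as written, and the two steps you treat as routine also have gaps.
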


\begin{proof} Since $A$ is tame and $\C(\alpha)$ is an indecomposable irreducible component that is not an orbit closure, we can write
$$
\C(\alpha)=\overline{\cup_{\lambda \in \mathcal{U}} \GL(\alpha)V_{\lambda}},
$$
where $\{V_{\lambda}\}_{\lambda \in \mathcal{U}}$ is a $1$-parameter family of indecomposable $A$-modules with $\mathcal{U} \subseteq \mathbb{A}^1$ an open subset. On one hand, we know that $\p_A(\alpha)$ is an open subvariety of $\C(\alpha)$ by Proposition \ref{irr-comp-proj-1-prop}. On the other hand, we know from Crawley-Boevey's Homogeneity Theorem for tame algebras that all modules $V_{\lambda}$, with finitely many exceptions, are homogeneous, i.e. $V_{\lambda} \simeq \tau_A V_{\lambda}$ (see \cite{CB5}). Therefore, after possibly shrinking $\mathcal{U}$, we can assume that each $V_{\lambda}$ has projective dimension at most one, and $\tau_A V_{\lambda} \simeq V_{\lambda}$.\\ 

\noindent
\textbf{Claim 1:} $\C(\alpha)$ contains a Schur $A$-module.

\begin{proof}[Proof of \textbf{Claim 1}] From the discussion above and  \cite[Lemma 3]{CC13}, we can choose a $\lambda_0 \in \mathcal{U}$ such that
\begin{enumerate}[(i)]
\item $\pdim V_{\lambda_0} \leq 1$;
\item $\tau_A V_{\lambda_0} \simeq V_{\lambda_0}$;
\item $\dim \GL(\alpha)-\dim \C(\alpha)=\dim_K \End_A(V_{\lambda_0})-1$.
\end{enumerate}
As in the proof of Lemma \ref{lemma-1-ext-0}, using (i) we get that
$$
\dim \C(\alpha)-\dim \GL(\alpha) V_{\lambda_0}=\dim \Ext_A^1(V_{\lambda_0}, V_{ \lambda_0}).
$$
This combined with (iii) yields that $\dim \Ext_A^1(V_{\lambda_0}, V_{\lambda_0})=1$. Finally, using (i), (ii), and the Auslander-Reiten duality, we get that
$$
\dim \End_A(V_{\lambda_0})=\dim \Ext_A^1(V_{\lambda_0}, V_{\lambda_0})=1,
$$
i.e. $V_{\lambda_0}$ is a Schur $A$-module.
\end{proof} 

Since the Schur $A$-modules of a fixed dimension vector form an open subvariety of the corresponding module variety, Claim 1 above allows us, after possibly shrinking $\mathcal{U}$, to assume that each $V_{\lambda}$ is a Schur $A$-module, as well.\\

\noindent
\textbf{Claim 2:} $V_{\lambda}$ is a $\theta$-stable $A$-module for all $\lambda \in \mathcal{U}$.

\begin{proof}[Proof of \textbf{Claim 2}] The canonical weight $\theta^{V_{\lambda}} \in \ZZ^{Q_0}$ associated to $V_{\lambda}$ (see \cite{AdaIyaRei-2014, Domo}) is defined as follows
$$
\theta^{V_{\lambda}}(x)=\dim_K \Hom_A(V_{\lambda}, S_x)-\dim_K \Hom_A(S_x, \tau_A V_{\lambda}), \forall x \in Q_0.
$$
Then, since $V_{\lambda}$ is a homogeneous Schur $A$-module, $V_{\lambda}$ is stable with respect to $\theta^{V_{\lambda}}$ for all $\lambda \in \mathcal{U}$ (see \cite[Lemma 5]{ChiKinWey-2015}). Moreover, we have that $\theta^{V_{\lambda}}=\langle \alpha, \cdot \rangle_A$ since $\pdim V_{\lambda} \leq 1$. Thus all modules $V_{\lambda}$ are $\theta$-stable.
\end{proof} 

Now, let us choose any two distinct scalars $\lambda, \lambda' \in \mathcal{U}$. Then, according to Claim 2, $V_{\lambda}$ and $V_{\lambda'}$ are non-isomorphic, homogeneous $\theta$-stable modules. Being non-isomorphic and $\theta$-stable yields
$$
\Hom_A(V_{\lambda}, V_{\lambda'})=\Hom_A(V_{\lambda'}, V_{\lambda})=0.
$$

This Hom-orthogonality combined with the Auslander-Reiten duality and the fact that the two modules are homogeneous gives
$$
\Ext^1_A(V_{\lambda}, V_{\lambda'})=\Ext^1_A(V_{\lambda'}, V_{\lambda})=0.
$$
So, we get that $\ext_A^1(\C(\alpha), \C(\alpha))=0$.
\end{proof}

\begin{remark} Let $\C(\alpha)$ be an irreducible component as in Lemma \ref{lemma-2-ext-0}. Then, for a generic $M \in \C(\alpha)$ with $\pdim_A M \leq 1$ and $M\simeq \tau_AM$, Voigt's isomorphism and the Auslander-Reiten duality yield
$$
\codim_{\C(\alpha)}(\GL(\alpha)M)=\dim \Hom_A(M, \tau_A M),
$$
so $\C(\alpha)$ is generically $\tau$-reduced in the sense of \cite{GeiLabSch-2020}. Thus $\mathbf{Claim 1}$ in the proof above also follows from \cite[Theorem 3.2(ii)]{GeiLabSch-2020}. Nonetheless, we have provided the short, simple proof of $\mathbf{Claim 1}$ for completeness.
\end{remark}

We are now ready to prove Proposition \ref{main-prop}.

\begin{proof}[Proof of Proposition \ref{main-prop}] Part (i) follows from Lemmas \ref{lemma-1-ext-0} and \ref{lemma-2-ext-0}. For part (ii), let 
$$
\C(\alpha)=\overline{\C(\alpha_1) \oplus \ldots \oplus \C(\alpha_l)}
$$
be the generic decomposition of $\C(\alpha)$ with $\C(\alpha_1)$, $\ldots$, $\C(\alpha_l)$ indecomposable irreducible components such that $\ext^1_A(\C(\alpha_i), \C(\alpha_j))=0$ for all $1 \leq i \neq j \leq l$ (see Theorem \ref{decomp-irr-thm}{(1)}). We also know from part $(1)$ that $\ext^1_A(\C(\alpha_i), \C(\alpha_i))=0$ for all $1 \leq i \leq l$ . Using Theorem \ref{decomp-irr-thm}{(2)}, we get that
$$
\overline{\C(\alpha_1)^{\oplus n} \oplus \ldots \oplus \C(\alpha_l)^{\oplus n}}
$$
is an irreducible component of $\module(A, n \alpha)$. Moreover, since the generic module of this irreducible component has projective dimension at most one, we must have that 
$$
\C(n \alpha)=\overline{\C(\alpha_1)^{\oplus n} \oplus \ldots \oplus \C(\alpha_l)^{\oplus n}}.
$$

For part (iii), consider the generic decomposition of $\C(\alpha)$ 
$$
\C(\alpha)=\overline{\C(\alpha_1) \oplus \ldots \oplus \C(\alpha_l)}
$$
where $\C(\alpha_1)$, $\ldots$, $\C(\alpha_l)$ are indecomposable irreducible components such that $\ext^1_A(\C(\alpha_i), \C(\alpha_j))=0$ for all $1 \leq i \neq j \leq l$. We know from \cite[Section 2.3]{GeiSch-2005} that
$$
\mu_g(\C(\alpha))=\sum_{i=1}^l \mu_g(\C(\alpha_i)).
$$
(Recall that for an affine $G$-variety $X$, where $G$ is a linear algebraic group, the number of generic parameters is $\mu_g(X):=\dim X-\max \{\dim Gx \mid x \in X\}.$) It follows from Lemmas \ref{lemma-1-ext-0} and \ref{lemma-2-ext-0} that
$$\mu_g(\C(\alpha_i))=
\begin{cases}
1 \text{~if~} \C(\alpha) \text{~is not an orbit closure}\\
0 \text{~if~} \C(\alpha_i) \text{~is an orbit closure}
\end{cases} 
$$
So we get that
\begin{equation} \label{gen-number-param-eqn-1}
\mu_g(\C(\alpha))=t,
\end{equation} where $t$ is the number of indecomposable irreducible components (counting multiplicities) in the generic decomposition of $\C(\alpha)$ which are not orbit closures.

On the other hand, if $X=\bigoplus_{i=1}^l X_i$ is a generic representation in $\C(\alpha)$ then 
\begin{equation} \label{gen-number-param-eqn-2}
\mu_g(\C(\alpha))=\dim \C(\alpha)-\dim \GL(\alpha)+\dim_K \End_A(X)\geq \dim \C(\alpha)-\dim \GL(\alpha)+l
\end{equation}

\noindent
$(\Longrightarrow)$ It follows from $(\ref{gen-number-param-eqn-1})$ and $(\ref{gen-number-param-eqn-2})$ that if $\dim \C(\alpha)=\dim \GL(\alpha)$ then $t=l$. \\

\noindent
$(\Longleftarrow)$ For this implication, Theorem \ref{decomp-irr-thm}{(2)} and Lemma \ref{lemma-2-ext-0} allow us to choose the $X_i \in \C(\alpha_i)$, $i \in \{1, \ldots, l\}$, such that they are all homogeneous, Schur $A$-modules of projective dimension at most one, and $\Ext_A(X_i, X_j)=0$ for all $1 \leq i \neq j \leq l$. Then $\Hom_A(X_i, X_j)=0$ by the Auslander-Reiten duality, and so we get that
$$
\dim_K \End_A(X)=\sum_{i=1}^l \dim_K \End_A(X_i)=l.
$$

Using $(\ref{gen-number-param-eqn-1})$ and $(\ref{gen-number-param-eqn-2})$, we finally obtain that $l=\mu_g(\C(\alpha))=\dim \C(\alpha)-\dim \GL(\alpha)+l$, i.e.
$$
\dim \C(\alpha)=\dim \GL(\alpha).
$$

\end{proof}

Finally, we are now in a position to prove Theorem \ref{main-thm-2}.

\begin{proof}[Proof of Theorem \ref{main-thm-2}] Let $\sigma \in \ZZ^{Q_0}$ be a weight in $\LL_{\V}$ and $n \geq 1$ an integer such that $n \sigma \in \s_Q(\V)$. So, we can write $\restr{\sigma}{\supp(\beta)}=\langle \alpha, -\rangle_{B}$ with $\alpha \in \NN^{\supp(\beta)}$ such that $\p_B(\alpha) \neq \emptyset$. Let 
$$
\C(\alpha)=\overline{\C(\alpha_1) \oplus \ldots \oplus \C(\alpha_l)}
$$
be the generic decomposition of $\C(\alpha) \subseteq \module(B, \alpha)$ with $\C(\alpha_1)$, $\ldots$, $\C(\alpha_l)$ indecomposable irreducible components. By Proposition \ref{main-prop}, we have that
$$
\C(n \alpha)=\overline{\C(\alpha_1)^{\oplus n} \oplus \ldots \oplus \C(\alpha_l)^{\oplus n}}.
$$

Now we know from Proposition \ref{orbit-semigrp-homological-prop} that that the generic representation of $\C(n \alpha)$ is orthogonal to $\V$ since $n \sigma \in \s(\V)$. Consequently, there are representation $V^i_1, \ldots V^i_n \in \p_{B}(\alpha_i)$, $1 \leq i \leq l$, such that $\bigoplus_{i=1}^l \bigoplus_{j=1}^n V^i_j$ is orthogonal to $\V$. In particular, $V:=V^1_1\oplus V^2_1 \oplus \ldots \oplus V^l_1$ is an $\alpha$-dimensional $B$-module of projective dimension at most one such that $\Hom_{B}(V, \V)=\Ext^1_{B}(V, \V)=0$. Thus $\sigma \in \s(\V)$ by Proposition \ref{orbit-semigrp-homological-prop}. 
\end{proof} 

\begin{example} \label{examples} $(1)$ [\textbf{Tame examples}] For any acyclic tame algebra $A$ and $A$-module $\V \in \module(A, \beta)$, if $\sigma \in L_\V$ then $\sigma$ is $\V$-saturated by Theorem \ref{main-thm-2}.\\

\noindent
$(2)$ [\textbf{Wild Schur-tame examples}] Let us consider the bound quiver algebra $A=KQ/I$ where
$$
Q=\vcenter{\hbox{  
\begin{tikzpicture}[point/.style={shape=circle, fill=black, scale=.3pt,outer sep=3pt},>=latex]
   \node[point,label={left:$1$}] (1) at (0,0) {};
   \node[point,label={above:$2$}] (2) at (1.5,1) {};
   \node[point,label={below:$3$}] (3) at (3,0) {};
   \node[point,label={above:$5$}] (5) at (4.5,1) {};
   \node[point,label={below:$4$}] (4) at (5,0) {};   
  \path[->]
   (2) edge  node[midway, above] {$a$} (1)
   (3) edge  node[midway, above] {$b$} (2)   
   (3) edge  (1)
   (5) edge  (3)
   (4) edge  (3);
   \end{tikzpicture} 
}} \text{~~~and~~~~} I=\langle a b \rangle. 
$$
Then $A$ is a wild Schur-tame algebra and, for every Schur $A$-module $\V$, either $\V(a)=0$ or $\V(b)=0$ (see \cite{CC13}). So, the algebra $A/\Ann_A(\V)$ is a quotient of the path algebra of a $\widetilde{\mathbb{D}}_4$ or $\mathbb{D}_5$ quiver; either way, $A/\Ann_A(\V)$ is a tame algebra. Thus, for this wild algebra, all the weights in the weight semigroup of any Schur module $\V$ are $\V$-saturated.  
\end{example}

\begin{remark} \label{final-rmk} Let $A=KQ/ \langle \R \rangle$ be an acyclic bound quiver algebra, $\V \in \module(A, \beta)$ an $A$-module, and $\sigma \in \ZZ^{Q_0}$ a weight. If $\sigma$ is not $\V$-saturated (equivalently, if $(\V, \sigma)$ is not an ERP datum) then the answer to Edmonds' problem is \emph{always} NO. On the  other hand, if $\sigma$ is $\V$-saturated, as we have seen in Theorem \ref{membership-thm-1}, there exists a deterministic polynomial time algorithm to decide whether the answer to Edmonds' problem for $(\V, \sigma)$ is YES or NO. So, Edmonds' problem comes down to deciding whether a given weight is $\V$-saturated or not. 

Let us now assume that $B:=A/\Ann_A(\V)$ is tame; for example, this always happen when $A$ is tame or if $A$ and $\V$ are as in Example \ref{examples}{(2)}. Then Edmonds' problem can be further reduced to the problem about deciding whether $\sigma$ lies in $\LL_{\V}$ or not. Indeed, if $\sigma \notin \LL_{\V}$ then the answer to Edmonds' problem for $(\V, \sigma)$ is NO since $\s_Q(\V) \subseteq \LL_{\V}$. Otherwise, Theorems \ref{membership-thm-1} and \ref{main-thm-2} tell us that there exists a deterministic polynomial time algorithm to decide whether  $\Span_{\CC}(W^{i,j,p}_{q,r} \mid (i,j,p,q,r) \in \I_{\sigma})$ contains a non-singular matrix.

We are thus led to the following general problem: 
\begin{itemize}
\item[] \emph{Given an acyclic bound quiver algebra $\Lambda=KQ/ \langle \R \rangle$ and a dimension vector $\alpha \in \NN^{Q_0}$, decide whether $\p_{\Lambda}(\alpha) \neq \emptyset$}. 
\end{itemize}
\noindent
We plan to address the algebraic complexity of this problem in a sequel to this work. 
\end{remark}


\end{document}